
\documentclass[eqnumis, pdf]{hjms}
\usepackage{amsfonts}
\usepackage{amsmath}
\usepackage{footnote}
\usepackage{multicol}
\usepackage{booktabs}
\usepackage[flushleft]{threeparttable}
\usepackage[font=small,labelfont=bf]{caption}

\begin{document}
\hinfo{XX}{x}{XXXX}{1}{\lpage}{10.15672/hujms.xx}{Article Type}
%

\markboth{S.A. Aghili, H. Mirebrahimi, A. Babaee}{Targeted Complexity}

\title{On the Targeted Complexity of a Map}

\author{Seyed Abolfazl Aghili\coraut$^{1}$, Hanieh Mirebrahimi$^1$, Ameneh Babaee$^1$}

\address{$^1$ Department of Pure Mathematics, Ferdowsi University of Mashhad, Mashhad, Iran, P.O. Box 91775-1159.}
\emails{s.a.aghili@mail.um.ac.ir (S.A. Aghili), h\_mirebrahimi@um.ac.ir (H. Mirebrahimi), \\ ambabaee057@gmail.com (A. Babaee)}
\maketitle

\begin{abstract}
We study the topological complexity of work maps  with respect to some subspaces of the configuration space  and a workspace considered as the target set of the motion of robots. The motivation is to optimize and reduce the number of motion planners for work maps. In this regard, we  focus on the useful set of works. We  check some basic properties of the targeted complexity of maps, such as homotopical invariance, reduction, the product of maps, and so on. Then we compare these targeted complexities, and we find some  inequalities in reducing the number of motion planners.  We show that the relative topological complexity of pair of spaces defined by Short is a special case of the targeted complexity of work maps.
\end{abstract}
\subjclass{55M99, 70B15, 68T40}  
\keywords{Topological complexity, work map, configuration space, motion planning.}        

\hinfoo{DD.MM.YYYY}{DD.MM.YYYY} 


\section{Introduction and motivation}

In the modern world, robots play important roles in the development of industry and technology. Using robots as human resources is inevitably important in several ways: reducing human errors, increasing the  performance accuracy, reducing the likelihood of human-health hazards, and so on. Indeed, applying robots, despite the benefits, is not easy, and many issues need to be resolved.

One of the most important steps for using robots is  solving the motion planning problem. The motion planning problem is a construction of a program for moving a robot  from one point to another, formulated as follows: Let the robot can move at all points  of the connected space $X$.  
Each motion planning corresponds to a program that defines a connecting path for any pair of points.
In most sources  that studied topological complexity, configuration spaces are considered to be path-connected, and then for every pair of points, there exists some path connecting them. In robotic motions, it is important that the correspondence establishing paths is continuous. It means that for any two close pairs of points $(A, B)$ and $(A', B')$, their corresponded paths $s(A, B)$ and $s(A', B')$ are also close. The closeness of paths is defined by the topology of path space $PX$. The number of discontinuities of the motion planning is called the topological complexity of space $X$, denoted by $TC(X)$. Farber \cite{bib11} supposed any robot as a point in the motion space. Then other authors \cite{bib2, bib9, bib7, bib1} modified that assumption and considered two distinct spaces:  the space of all states of the robot, called the configuration space, and the space of all robot tasks, called the workspace of the robot; see Figure \ref{fig00}. Then they generalized the definition of topological complexity for any given work map. Pave\v{s}ic \cite{bib1, bib10}, Rami and Derfoufi \cite{bib5}, Murillo and Wu \cite{bib2}, and so on defined and studied the topological complexities of maps, each with its own advantages, applications, and lacks.
The topological complexity of maps was generalized to the higher topological complexity of maps denoted by $TC_n (f)$; for more information, see \cite{Is, bib6}.

In this paper, we study and modify the topological complexity of maps defined by Pave\v{s}ic \cite{bib1}, Scott \cite{bib7}, and Is et al. \cite{Is}, with respect to some subsets of the configuration and workspaces. These sets are assumed to be the targets of the motion of robots; that is, the endpoint of the motion necessarily belongs to the target set. In fact, if the  target set equals the whole space, then the targeted complexity is the same complexity as was defined originally. 
 Scott \cite[Definition 3.1]{bib7} defined a homotopy invariant topological complexity  $TC^S$ and  proved that $TC^{MW}$, the topological complexity defined by Murillo and Wu \cite{bib2}, is equivalent to the notion $TC^S$. Also, he  \cite[Examples 3.3 and 3.4]{bib7} showed with examples that $TC^{RD}(f)$, the topological complexity defined by Rami and Derfoufi \cite{bib5}, and $TC^{P}(f)$ are not homotopy invariant. 


In Section 2, we recall some preliminaries, such as  definitions of topological complexities of maps and some of their basic properties. We need the definition of these notions and their properties for the rest of the paper.
Then in Section 3, we consider the complexity of Pave\v{s}ic with the target that depends on a subspace of the given configuration space. We call the motion and complexity  with some fixed task sets the targeted motion and targeted complexity. A special case of targeted motion is the endpoint movement whenever the target set is a singleton; see \cite{endpoint}.
Using this complexity, one can choose subspaces of the configuration space and workspace to focus on some special works done by a robot. Then only the useful set of works needs to have motion planners, and we can program the robot only to do the required work. 
Then we investigate some basic properties such as the targeted complexity of the composition and product of maps.

In Section 4, we compare the targeted complexity of maps with other  complexities and then  find several inequalities under some conditions. Also, we see that for the identity map, the targeted complexity coincides with the relative topological complexity of the pair of spaces defined by Short \cite{bib4}. Recall that for the identity map, the robot is considered a point in the configuration space, and then the configuration space and the workspace coincide.

In Section 5, we study the targeted complexity based on the Scott's complexity of maps. Then we check some properties of targeted complexity from this point of view.
In Section 6, we reformulate a kind of $n$-dimensional complexity and also a kind of $n$-dimensional  targeted complexity. Then we investigate  their basic properties, such as composition, product, and relation to the complexity of topological spaces. 

\section*{Motion planning}

It is well known that the topological complexity of spaces is a special case of the complexity of maps; that is, the complexity of identity map. In robotic motions, the identity map as the work map means that we consider the robot moves as a point in the configuration space. Recall that in this case, the configuration space and the workspace are assumed to be the same. For the work maps, the section maps prepare motion planners for the motion of robot as a map from the state of robot to the corresponded task. Let $f:X \to Y$ be a continuous map. The complexities of maps studied in this work provide motion planners as follows.
\begin{itemize}
\item
For the Pave\v{s}ic's complexity, for each pair of points $(x, y) \in X \times Y$, the motion planner  offers a path from $x$ to $x'$, where $f(x') = y$. In other words, we intend to find an algorithm, namely, the section $s: U \subseteq X \times Y \to PX$, for achieving from a given state (point $x$) to a required task (point $y$). By this method, we move in the configuration space to attain some states whose corresponded task we want to reach. In the targeted case, we choose some tasks and define motion planners just on this set of tasks, which can be restricted to the useful or special tasks we require. Let $B$ be a set whose image $f(B)$ is the needed set of tasks called target set. Then the section $s$ is defined from  the open set $U \subseteq X \times f(B)$ to the path space of $X$; that is, the endpoint of the path necessarily  belongs to the set $B$.

\item
For the map $f: X \to Y$, Scott considered the motion planner $s$ from the set $Z \subseteq X \times X$ to the path space $PY$ such that it corresponds to each pair of points $(x_0, x_1)$ a path $\alpha$ in $Y$ connecting $f(x_0)$ to $f(x_1)$.
That is, the motion planner is an algorithm to find a path to move from the task $f(x_0)$ to the task $f(x_1)$.
For the targeted case of Scott's complexity, the endpoint belongs to the target set $f(B)$. In fact, we find an algorithm to move from the task $f(x_0)$ to the task $f(b)$ ending in the target set $f(B)$. The Scott's complexity has a difference with the Pave\v{s}ic's complexity. The motion planners to obtain Scott's complexity move in the workspace $Y$, but motion planners for Pave\v{s}ic's complexity move in the configuration space $X$. Hence, to find the most efficient algorithm for motion planners, we first verify whether it is easier to move in the configuration space  or the workspace, and then choose one of the Pave\v{s}ic's complexity or Scott's complexity to use.

\item
In general, the $n$-dimensional complexities count the motion planners with paths consisting of $n$ pieces. One of the applications of $n$-dimensional complexity and breaking paths to subpaths is to recognize and eliminate repeated movements; see \cite{Bas, Bas2}. For example, Is et al. considered paths and pieces in the workspace. In Section 5, we reformulate this $n$-dimensional complexity such that paths and pieces belong to the configuration space. Then  we choose a target set for the movement and consider the motion planners such that subpaths end in the target set contained in the configuration space. Finally, we can choose the easier one, either the configuration space or the workspace, to find the best motion planners.

\end{itemize}

\begin{figure}[!ht]
\centering
\includegraphics[scale=.25]{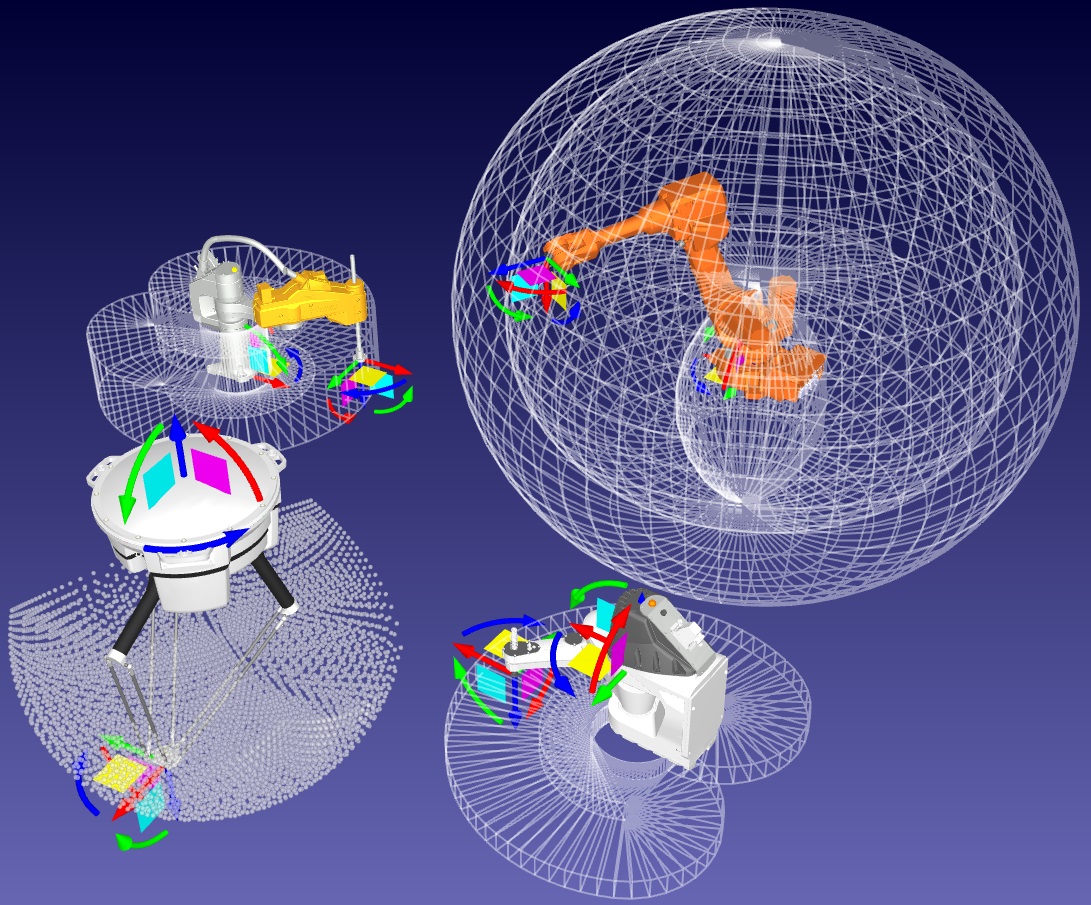}
\caption{Some examples of configuration space and workspace.}\label{fig00}
\end{figure}

\section{Preliminaries}
Let  $X$ be a path-connected topological space,  let $PX$ denote the space of all paths in $X$ equipped with the compact-open topology, and let $\pi : PX \to X \times X$ denote the natural path fibration mapping each path to its endpoints.   Farber \cite{bib11} defined the \textit{topological
complexity} of space $X$, denoted by $TC(X)$, as the minimum number of motion plannings. By a \textit{motion planning} we mean a continuous map $s_i : U_i \to PX$ with $\pi s_i = id$ over $U_i$, where $U_i \subseteq X \times X$.
If there is  no such integer $k$, then $TC(X)$ is considered to be the infinity, $TC(X) := \infty$. The map $s_i$ is called a \textit{section} for the fibration $\pi: PX \to X$. For any continuous surjection $p: E \to B $, by a \textit{total section} we mean a continuous right inverse of $p$, namely, a map $s: B \to E$ such that $p s = id_B$. Moreover, for any subspace $A\subseteq B$, a
\textit{section} of $p$ over $A$ is a section of the restriction map $p: p^{-1}(A) \to A$.

The notion of topological complexity of a given space can be expressed in connection with the sectional number of a special map. We recall the definition of sectional number in the following statement as defined in \cite[p. 3]{bib1}.

\begin{definition}
Let $p: E \to B $ be a continuous map. We define the \textit{sectional number} of $p$,  $sec(p)$, to be the minimal integer $n$ for which there exists an increasing sequence of open subsets
$\emptyset = U_0 \subset U_1 \subset U_2 \subset \cdots \subset U_k = B$
such that each difference $ U_i - U_{i-1}, i = 1, \ldots, n$, admits a continuous partial section
of $p$. If there is no such integer $n$, then  $sec(p)$ is considered to be $\infty$.
The topological complexity of a path connected space $X$ is
$TC(X) := sec(\pi)$, where $\pi : PX \to X \times X$ is the natural path fibration.
 \end{definition}

Robots have different structures and use; for instance, when the mechanical system is a robotic arm, a system consisting of some bars and flexible joints (Figure \ref{fi1}), the $TC(X)$ input on the navigation problem is not quite satisfactory.

\begin{figure}[!ht]
\centering
\includegraphics[scale=.8]{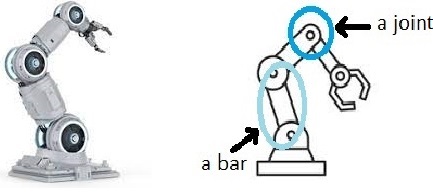}
\caption{Robotic arm.}\label{fi1}
\end{figure}

The configuration space of the mechanical system equals the set of all possible states of the robot, and the workspace equals the set of all states of doing some task. For instance, the workspace of a robotic arm equals the set of all points that may be  accessed by the end effector.
 In the case of a robotic arm, depending on the motion of the joints, the workspace equals the product of some spheres, cylinders, and so on; see Figure \ref{fi2}.

 \begin{figure}[!ht]
\centering
\includegraphics[scale=.5]{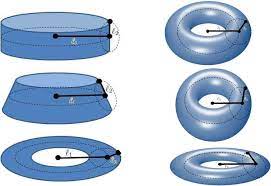}
\caption{Workspace of a robotic arm with two bars and two joints is the torus \cite{bib2}.}\label{fi2}
 \end{figure}
Farber \cite{bib11} considered robots as some points in the configuration space, and then the configuration space and the workspace coincide. Indeed Murillo and Wu \cite{bib2} separated these two concepts and generalized the usual topological complexity for any work map $ f: X \to Y$ from the configuration space $X$ to the workspace $Y$. Since for the usual topological complexity, the configuration space and the workspace are assumed to be the same, the work map is the identity; that is, $TC (X) = TC^{MW} (id_X)$ for any workspace $X$. In that case, Dranishnikov  in his lectures at the workshop on Applied Algebraic Topology in Castro Urdiales (Spain, 2014)  suggested to studying the topological complexity of maps $TC(f)$ and defined it as the minimal number $k$ that $X\times Y$ can be covered by open sets $U_0, \ldots,U_k$ such that  over each $U_i$, there is a section for the map $q: X^I \to X \times Y$ defined as $q(\phi)=(\phi(0),f\phi(1))$ (see \cite{bib10}). The drawback of
this definition is that it can be applied only to maps $f:X \to Y$  having the path lifting property. All path fibrations and the left divisor of path fibrations have this property, but it is unclear whether forward kinematic maps for general robotic arms are left divisors of fibrations.

Pave\v{s}ic \cite{bib1,bib10} modified the following definition to cover general maps.
\begin{definition}[\cite{bib1}]
Let $f: X \to Y $ be a continuous map between path-connected spaces and let
$\pi_f : X^I \to X \times Y$ be defined as $\pi_f(\alpha) = (\alpha(0), f \alpha(1)) $. Then the
topological complexity of the map $f$ is defined as
$TC^{P}(f) := sec(\pi_f )$. Clearly $TC^{P}(id_X) = TC(X)$.
\end{definition}
Indeed, still, his definition lacks one important feature: It is not a homotopy invariant.
 Another version of the topological complexity of a map was given by Rami and Derfoufi \cite{bib5}, which is denoted by
$TC^{RD}(f)$. This version is also not a homotopy invariant.
Recently when this work was in progress,  Murillo and Wu \cite{bib2} defined their own notion of $TC(f)$, denoted by $TC^{MW}(f)$, which is a homotopy invariant.
They used the concept $Secat_f(p)$ to define this notion from $TC(f)$ as follows.
\begin{definition}[\cite{bib2}]
Let $p:E \to B$ and $f:B \to X$ be two continuous maps.
 An open set $U\subseteq B$ is $f$-categorical if there is a section $s: U\to E$ such that $f p s \simeq f\vert_U$. The $f$-sectional category of $p$, denoted by $Secat_f (p)$, is the least $n \leq \infty $ for which $B$ admits a covering of $f$-categorical open sets.
\end{definition}
Let $f:X\rightarrow Y$ be a continuous map. Then $TC^{MW}(f)=Secat_{f\times f}(\pi)$,
where $ \pi: X^I \to X \times X $ is the path fibration defined by  $\pi(\alpha)=(\alpha(0),\alpha(1))$; see \cite{bib2}.

Murillo and Wu \cite{bib2} gave a cohomological lower bound for this invariant. Notably, with the perspective Murillo and Wu took, they were unable to prove the inequality $TC(f) \leq TC(Y)$.
 Scott \cite[Definition 3.1]{bib7} defined a homotopy invariant notion for the topological complexity of
a map $f:X \to Y$, denoted by $TC^S(f)$, and he proved that $TC^{MW}(f)$ is equivalent to the notion $TC^S(f)$.  Moreover, he proved the basic inequality $TC^S(f)\leq \min \lbrace TC(X), TC(Y) \rbrace$.
The topological complexity of a map from point of view of Scott is defined as follows.
\begin{definition} [\cite{bib7}]
Let $f: X \to Y$ be a map.\\
1) An $f$-motion planner on a subset $Z \subseteq X \times X$ is a map $f_Z : Z \to Y^I$ such that
$f_Z(x_0, x_1)(0)= f(x_0)$ and $f_Z(x_0, x_1)(1)= f(x_1)$.\\
2) An $f$-motion planning algorithm is a cover of $X \times X$ by sets $Z_0, \ldots,Z_k$ such that, for each
$Z_i$, there is an $f$-motion planner $f_i: Z_i \to Y^ I$.\\
3) The (pullback) topological complexity of $f$, denoted by $TC^S(f)$, is the least integer $k$  that $X\times X$
can be covered by $k+1$ open subsets $U_0, \ldots,U_k$ on which there are $f$-motion planners. If
no such $k$ exists, then  $TC^S(f)= \infty$.
\end{definition}

Scott defined the mixed topological complexity of $f$, denoted by $TC^{\frac{1}{2}}(f)$, and  proved the inequalities
\begin{equation}\label{eq}TC^S(f)\leq TC^{\frac{1}{2}}(f)\leq TC^{P}(f).\end{equation}

\begin{definition}[\cite{bib7}]
Let $f : X \to Y$ be a map.
\begin{enumerate}
\item
 A mixed $f$-motion planner on a subset $Z \subseteq  Y \times X$ is a map $f_Z : Z \to Y^I$ such that $f_Z(y, x)(0) = y$ and $f_Z(x_0, x_1)(1)= f(x_1)$.

\item
A mixed $f$-motion planning algorithm is a cover of $Y \times X$ by sets $Z_0, \ldots, Z_k$ such that on
each $Z_i$, there is a mixed $f$-motion planner $f_i : Z_i \to Y^I$.
\item
 The mixed topological complexity of $f$, denoted by $TC^{\frac{1}{2}} (f)$, is the least $k$ such that $Y \times X$ can
be covered by $k + 1$ open subsets $U_0, \ldots, U_k$ on which there are mixed $f$-motion planners.
If no such $k$ exists, we define $TC^{\frac{1}{2}} (f) = \infty$.
\end{enumerate}
\end{definition}

Scott used the homotopy invariance of $TC^S(f)$ to define the topological complexity of group homomorphisms.

 Short \cite{bib4} defined the relative topological complexity of pair of spaces $(X, Y)$, where $Y\subseteq X$,  as a new variant of relative topological complexity.

\begin{definition}[\cite{bib4}]
Let $f: E\to B$ be a fibration. The Schwarz genus of $f$, denoted by $genus(f)$, is the smallest integer
$k$ such that there exists $\{ U_i \}_{i=1}^k$, an open cover of $B$, along with sections $s_i: U_i\to E$ of $f$.
\end{definition}

The relative topological complexity of a pair of spaces $(X, Y)$, denoted by $TC(X, Y)$, was defined as the Schwarz genus of a natural path fibration map. The relative topological complexity of a pair of spaces is a lower bound for the classic topological complexity defined by Farber \cite{bib11}. In fact, it is modified to count the number of rules presenting paths whose endpoints are limited.

\begin{definition}[\cite{bib4}]
Let $B\subseteq X$ and let $P_{X\times B}= {\lbrace \gamma \in PX \vert \gamma(0) \in X,\ \gamma(1) \in B \rbrace}$. There is a natural fibration $\pi_B: P_{X\times B} \to X \times B$ with $\pi(\gamma)= (\gamma(0), \gamma(1))$. Then, the relative topological complexity of the pair $(X,B)$ is the Schwarz genus of $\pi_B$.
 That is, $TC(X,B)= genus(\pi_B)= Secat(\pi_B)$.
\end{definition}

The concept of homotopic distance between two maps was introduced in \cite{Mac} as follows.
\begin{definition}[\cite{Mac}]
Let $f, g : X \to Y$ be two continuous maps. The homotopic distance $D(f, g)$ between $f$ and $g$ is the
least integer $n \geq 0$ such that there exists an open covering ${U_0, \ldots , U_n}$
of $X$ with the property that $f|_{U_j} \simeq g|_{U_j}$, for all $j = 0, \ldots , n$. If there is
no such covering, we define $D(f, g) = \infty$.
\end{definition}
Note that the  homotopic distance depends only on the homotopy class. That is,  if $f \simeq f^ \prime$ and $g \simeq g^\prime$, then $D(f, g) = D(f^{\prime}, g^{\prime})$. Moreover\\
1)  $D(f, g) = D(g, f)$.\\
2) $D(f, g) = 0$ if and only if the maps $f$ and $g$ are homotopic.\\
Also, for the composition of maps, we have the following inequality.
\begin{proposition}[\cite{Mac}]\label{e}
Consider maps $f,g:X\rightarrow Y$ and $h:Y\rightarrow Z$. Then
$D(hof,hog)\leq D(f,g)$.
\end{proposition}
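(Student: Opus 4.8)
The plan is to show that the very open cover witnessing $D(f,g)$ also witnesses the required inequality for $h\circ f$ and $h\circ g$. First I would dispose of the trivial case: if $D(f,g)=\infty$, then the asserted inequality holds vacuously, so I may assume $D(f,g)=n<\infty$.

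By the definition of homotopic distance, there then exists an open covering $\{U_0,\dots,U_n\}$ of $X$ such that $f|_{U_j}\simeq g|_{U_j}$ for every $j=0,\dots,n$. Fixing such an index $j$, I choose a homotopy $H_j\colon U_j\times I\to Y$ with $H_j(\,\cdot\,,0)=f|_{U_j}$ and $H_j(\,\cdot\,,1)=g|_{U_j}$. The key step is simply to post-compose with $h$: the map $h\circ H_j\colon U_j\times I\to Z$ is continuous, being a composition of continuous maps, and it satisfies $(h\circ H_j)(\,\cdot\,,0)=h\circ(f|_{U_j})=(h\circ f)|_{U_j}$ and $(h\circ H_j)(\,\cdot\,,1)=(h\circ g)|_{U_j}$, where I have used the elementary identity that restriction commutes with post-composition. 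Hence $h\circ H_j$ is a homotopy exhibiting $(h\circ f)|_{U_j}\simeq(h\circ g)|_{U_j}$.

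Since this argument applies to each $j=0,\dots,n$, the same cover $\{U_0,\dots,U_n\}$ is an admissible cover for the pair $(h\circ f,\,h\circ g)$. Therefore, directly from the definition of $D$, one obtains $D(h\circ f,h\circ g)\le n=D(f,g)$, which is the claimed inequality.

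I expect no genuine obstacle here; the only point requiring any care is the routine observation that post-composition with the fixed continuous map $h$ carries a homotopy between the restrictions $f|_{U_j}$ and $g|_{U_j}$ to a homotopy between the corresponding restrictions of the composites, together with the bookkeeping identity $h\circ(f|_{U_j})=(h\circ f)|_{U_j}$. No homotopy invariance of $D$ or minimality argument beyond the definition is needed.
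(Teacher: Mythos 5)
Your proof is correct: post-composing each local homotopy $H_j$ with $h$ immediately shows the cover witnessing $D(f,g)$ also witnesses $D(h\circ f, h\circ g)\leq D(f,g)$. The paper states this proposition without proof, citing \cite{Mac}, and your argument is exactly the standard one given there, so there is nothing to add.
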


 In the next section, we study targeted complexity for work maps as a generalization of Short's topological complexity of pair of spaces.

\section{Targeted complexity of maps}
In this section, using the complexity of maps defined by Pave\v{s}ic, we recall the targeted complexity of maps with respect to some subspaces of the configuration space and workspace. Using this topological complexity, we omit the subset of workspace that is not needed. In fact, we consider a subspace of the configuration space and its image by the work map and count only the number of motion planners whose paths end in the given subspace.

Let $f:X \to Y$ be a map and let $B \subseteq X$.
Recall that the map $\pi_f : PX \to X \times Y$ was defined in \cite{bib1, bib10} by   $\pi_f(\gamma)= (\gamma(0), f(\gamma(1)))$.

\begin{definition}\label{d2}
Let $\pi_{f(B)}: P_{X\times B} \to X\times f(B)$ be defined by $\pi_{f(B)}(\gamma)= (\gamma(0), f(\gamma(1)))$. Then
 the targeted complexity of map, denoted by $TC^P(f,B)$, is
the least integer $n \geq 0$ such that there exists an open covering ${U_0, \ldots, U_n}$
of $X\times f(B)$, where $U_i$ admits a partial section of $\pi_{f(B)}$ for $i = 0, \ldots, n$. If there is no such covering, then we define $TC^P(f,B)= \infty$.
\end{definition}

Note that in the above definition, we have  $TC^P(f,B)= Secat(\pi_{f(B)})$. Moreover, if $f$ is a fibration map and  $B = X$, then $TC^P(f, B) = TC^P (f)$. Also, if $f := id_X$, then $TC^P(f, B) = TC(X, B)$; that is, the relative topological complexity of the pair $TC(X, B)$, defined by Short,  is a special case of Definition \ref{d2}.

In Definition \ref{d2}, the targeted complexity of a map is defined as the sectional category number of  a map, restricted to some subspace. The original map was previously  assumed by Pave\v{s}ic.
He showed that for homotopic fibration maps, the complexities are equal. For the targeted complexities, it holds if the target subspaces are equal as follows.

\begin{proposition}\label{pr4.13n}
If $f,g:X \to Y$ are homotopic fibrations and $f(B) = g(B)$, then $TC^P(f,B)=TC^P(g,B)$.
\end{proposition}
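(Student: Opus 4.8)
The plan is to exploit the fact that, because $f(B)=g(B)$, the two maps $\pi_{f(B)}$ and $\pi_{g(B)}$ share the \emph{same} total space $P_{X\times B}$ and the \emph{same} base $X\times f(B)=X\times g(B)$; they differ only in the second coordinate of the projection, which is $f\circ\mathrm{ev}_1$ for the first and $g\circ\mathrm{ev}_1$ for the second, where $\mathrm{ev}_1(\gamma)=\gamma(1)$. Since $TC^P(f,B)=Secat(\pi_{f(B)})$ and $TC^P(g,B)=Secat(\pi_{g(B)})$, it suffices to produce a fibrewise self-map $\Phi\colon P_{X\times B}\to P_{X\times B}$ with $\pi_{g(B)}\circ\Phi=\pi_{f(B)}$, together with a symmetric map $\Psi$ satisfying $\pi_{f(B)}\circ\Psi=\pi_{g(B)}$. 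Indeed, a map that commutes with the projections over the identity of the base carries a partial section of one map to a partial section of the other over the \emph{same} open set: if $\sigma\colon U\to P_{X\times B}$ satisfies $\pi_{f(B)}\sigma=\mathrm{id}_U$, then $\pi_{g(B)}(\Phi\sigma)=\pi_{f(B)}\sigma=\mathrm{id}_U$, so $\Phi\sigma$ is again a partial section. Hence $\Phi$ yields $Secat(\pi_{g(B)})\le Secat(\pi_{f(B)})$ and $\Psi$ the reverse inequality, giving the desired equality.

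To build $\Phi$, I would fix a homotopy $H\colon X\times I\to Y$ with $H_0=f$ and $H_1=g$. Given $\gamma\in P_{X\times B}$, its endpoint $b=\gamma(1)$ lies in $B$ and $f(b)$ is the second coordinate of $\pi_{f(B)}(\gamma)$. The idea is to correct $\gamma$ by appending a path that moves $b$ to a point $b'\in B$ with $g(b')=f(b)$: take the $Y$-path $t\mapsto H(b,1-t)$, which runs from $g(b)$ to $f(b)$, lift it through the fibration starting at $b$, and set $\Phi(\gamma)=\gamma*\tilde\delta$, where $\tilde\delta$ is the lifted correcting path. Using a continuous lifting function for the fibration makes $\Phi$ continuous, and by construction $\pi_{g(B)}(\Phi\gamma)=(\gamma(0),g(\tilde\delta(1)))=(\gamma(0),f(b))=\pi_{f(B)}(\gamma)$. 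Interchanging the roles of $f$ and $g$, and using that $f$ is a fibration, produces the symmetric map $\Psi$, and the two inequalities above combine to $TC^P(f,B)=TC^P(g,B)$.

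The main obstacle is precisely the targeting constraint: for $\Phi(\gamma)$ to lie in $P_{X\times B}$ its endpoint $b'=\tilde\delta(1)$ must belong to $B$, whereas a lift produced by the ambient fibration $g\colon X\to Y$ need not remain in $B$. This is where the hypothesis $f(B)=g(B)$ must do genuine work: it guarantees that the required value $f(b)$ is actually attained by $g$ somewhere in $B$, so that the correcting path can be chosen to terminate in $B$. I therefore expect the delicate step to be carrying out the correcting lift \emph{within} $B$ — concretely, lifting the restricted homotopy $H|_{B\times I}$, whose relevant terminal values lie in $f(B)=g(B)$, so that the entire correcting path, and in particular its endpoint, stays in $B$, with the lift started at the identity of $B$. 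This amounts to a relative homotopy-lifting property for the restricted maps $f|_B,g|_B\colon B\to f(B)$, and it is the one point that does not follow formally from $f,g$ being fibrations on all of $X$. Once this relative lifting is justified, continuity and the fibrewise identities follow routinely, and the homotopy invariance for equal target sets drops out.
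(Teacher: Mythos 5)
First, a point of reference: the paper itself offers \emph{no} proof of this proposition --- it is stated bare, as the targeted analogue of Pave\v{s}ic's homotopy invariance of $TC^P$ for fibrations --- so your attempt can only be judged on its own terms. Your reduction step is sound: a map $\Phi$ of the common total space $P_{X\times B}$ satisfying $\pi_{g(B)}\circ\Phi=\pi_{f(B)}$ over the common base $X\times f(B)=X\times g(B)$ does transport partial sections of $\pi_{f(B)}$ to partial sections of $\pi_{g(B)}$ over the same open sets. But the construction of $\Phi$ breaks down at exactly the point you flag, and that break is not repairable. The homotopy lifting property of $g\colon X\to Y$ produces a correcting path ending at \emph{some} point of $g^{-1}(f(b))$, with no control on whether that endpoint lies in $B$; the hypothesis $f(B)=g(B)$ gives only set-theoretic existence of some $b''\in B$ with $g(b'')=f(b)$, which is useless without a continuous, path-connected choice. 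Your proposed repair --- lifting $H|_{B\times I}$ against the restricted maps $f|_B,g|_B\colon B\to f(B)$ --- is an additional hypothesis, not a consequence of the ones given: restrictions of fibrations to subspaces are in general not fibrations, and worse, the correcting path $t\mapsto H(b,1-t)$ need not even take values in $f(B)=g(B)$, so the restricted lifting problem is not well posed inside the target set.

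The gap is in fact fatal, because the proposition as stated is false. Take $X=\mathbb{R}^2$, $Y=\mathbb{R}$, $f=\mathrm{pr}_1$, and $g(x_1,x_2)=x_1-h(x_2)$ with $h(t)=\min(\max(t,0),1)$. Both are fibrations ($g$ is $\mathrm{pr}_1$ composed with the homeomorphism $(x_1,x_2)\mapsto(x_1-h(x_2),x_2)$), and they are homotopic via $H_t(x_1,x_2)=x_1-t\,h(x_2)$ (indeed homotopic through fibrations). Let $B=\{(t,t):t\in\mathbb{R}\}$; then $f(B)=g(B)=\mathbb{R}$. On one hand $TC^P(f,B)=0$, since $(x,y)\mapsto$ the straight-line path from $x$ to $(y,y)$ is a global section of $\pi_{f(B)}$. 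On the other hand $TC^P(g,B)=\infty$: if $s$ were a section of $\pi_{g(B)}$ over an open set $U$ containing a point $(x_0,0)$, then evaluating paths at their endpoints along the slice $\{x_0\}\times V$ would give a continuous local section near $0$ of $q(t)=g(t,t)=t-h(t)$; but $q^{-1}(y)=\{y\}$ for $y<0$ and $q^{-1}(y)=\{y+1\}$ for $y>0$, so any such section must jump by $1$ across $y=0$ and cannot be continuous. Hence no open set containing $(x_0,0)$ admits a section, and no cover as in Definition \ref{d2} exists. So your instinct about where the difficulty sits is exactly right, and it kills not only your argument but the statement itself: the proposition needs a hypothesis such as $f|_B,g|_B\colon B\to f(B)$ being fibrations that are homotopic through maps $B\to f(B)$, under which your $\Phi$/$\Psi$ scheme (lifting the correcting path inside $B$) does go through.
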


%
%

In the next section, we show that $TC^{P} (f, B) = 0$ if and only if $f$ is nullhomotopic; see Corollary \ref{co4.6nn}. 
In the following proposition, we show that the targeted complexity increases by  locally injective left compositions.

\begin{proposition}\label{p1}
Let $f:X \to Y$ and $g:Y\to Z$ be two maps, let $B\subseteq X$, and let $g$ be injective on $B$. Then
$ TC^P(f,B)\leq TC^P(gf,B) $.
\end{proposition}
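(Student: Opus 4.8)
The plan is to realize both fibrations $\pi_{f(B)}$ and $\pi_{(gf)(B)}$ over a common total space and to transport a motion-planning cover for $gf$ back to one for $f$ along a map of base spaces. Observe first that both $\pi_{f(B)} : P_{X\times B} \to X \times f(B)$ and $\pi_{(gf)(B)} : P_{X\times B} \to X \times (gf)(B)$ have the \emph{same} total space $P_{X\times B}$, namely the paths in $X$ ending in $B$; they differ only in how the endpoint is pushed forward. I would introduce the continuous map
$$
G : X \times f(B) \to X \times (gf)(B), \qquad G(x,y) = (x, g(y)),
$$
which is well defined since $g(f(B)) = (gf)(B)$. The key identity is that $G$ makes the obvious triangle commute, i.e.
$$
G \circ \pi_{f(B)} = \pi_{(gf)(B)},
$$
because for any $\gamma \in P_{X\times B}$ one has $G(\gamma(0), f(\gamma(1))) = (\gamma(0), gf(\gamma(1)))$.

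Assume $TC^P(gf,B) = n$ (otherwise there is nothing to prove), and let $V_0, \ldots, V_n$ be an open cover of $X\times (gf)(B)$ with partial sections $\sigma_i : V_i \to P_{X\times B}$ of $\pi_{(gf)(B)}$. I would set $U_i := G^{-1}(V_i)$, which is open by continuity of $G$ and which covers $X\times f(B)$ since the $V_i$ cover $X\times (gf)(B) \supseteq G(X\times f(B))$. Over each $U_i$ define $s_i := \sigma_i \circ G|_{U_i} : U_i \to P_{X\times B}$, a continuous map.

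It remains to verify that each $s_i$ is a genuine section of $\pi_{f(B)}$, and this is the step where the hypothesis is needed. For $(x,y)\in U_i$ write $\gamma = \sigma_i(x,g(y))$; since $\sigma_i$ is a section of $\pi_{(gf)(B)}$ we get $\gamma(0) = x$ and $g(f(\gamma(1))) = g(y)$. Both $f(\gamma(1))$ and $y$ lie in $f(B)$ --- the former because $\gamma(1)\in B$, the latter because $(x,y)\in X\times f(B)$ --- so the injectivity of $g$ on $f(B)$ forces $f(\gamma(1)) = y$. Hence $\pi_{f(B)}(s_i(x,y)) = (\gamma(0), f(\gamma(1))) = (x,y)$, so $s_i$ is a partial section of $\pi_{f(B)}$ over $U_i$. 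The sets $U_0, \ldots, U_n$ thus form a targeted motion-planning cover for $f$, giving $TC^P(f,B) \le n = TC^P(gf,B)$.

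The only delicate point is this last verification, and I expect it to be the crux: without injectivity the pulled-back path $\gamma$ might end at some $b\in B$ with $g(f(b)) = g(y)$ but $f(b)\ne y$, so that $s_i$ would fail to be a section of $\pi_{f(B)}$. Everything else --- well-definedness of $G$, its continuity, and the covering property of the $U_i$ --- is formal. I would therefore emphasize explicitly that it is precisely the injectivity of $g$ restricted to $f(B)$ that is invoked, and that no assumption beyond this is required on $g$ or on the global behaviour of $f$.
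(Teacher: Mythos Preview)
Your argument is correct and follows essentially the same route as the paper: pull back the open cover of $X\times (gf)(B)$ along $1\times g$ and compose the given sections with $1\times g$ to obtain sections of $\pi_{f(B)}$. Your verification is in fact more careful than the paper's, which silently skips the step where injectivity is invoked; you also correctly isolate that what is actually needed is injectivity of $g$ on $f(B)$ (the hypothesis ``$g$ injective on $B$'' in the statement is imprecise, since $g$ is defined on $Y$).
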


\begin{proof}
Let $TC^P(gf,B)= n$. Then for some open subsets $U_0, U_1, \ldots, U_n $ of $X \times gf(B)$, there are $s_i: U_i \to P_{X\times B}$ such that $\pi_{gf(B)} s_i= id_{U_i} $. We set $V_i= (1\times g)^{-1} (U_i) \subseteq X\times f(B)$ and define $s_i^{\prime}: V_i \to P_{X\times B}$ by  $s_i^{\prime}(x,y)= s_i(1\times g)(x,y)$. Hence we  show that $\pi_{f(B)} s_i^{\prime}= id_{V_i}$. For this purpose, for all $(x,y)\in V_i$ with $x\in X$ and $y=f(b)\in f(B)$, we have $\pi_{f(B)} s_i^{\prime}(x,y)= \pi_{f(B)} s_i(1\times g)(x,y)= \pi_{f(B)} s_i(x,gf(b))= (x,f(b)=y)$. Therefore $ TC^P(f,B)\leq n$.
\end{proof}

Farber as the first author who studied topological complexity, investigated the topological complexity of product of spaces and presented the following inequality:
\[
TC(X \times Y) \le TC(X) + TC(Y).
\]
Now we intend to find a similar version. Let $f$ be an open map. Then the following proposition holds.

\begin{proposition}
Let $f:X\to Y$ and $g:Z\to W$ be two fibration maps, where $Y$ and $W$ are normal spaces, let $B\subseteq X,$, and let $ D\subseteq Z$. Then $TC^P(f \times g, B\times D) \leq TC^P(f,B) + TC^{P}(g,D)$.
\end{proposition}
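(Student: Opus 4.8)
The plan is to prove the product inequality by combining open covers witnessing $TC^P(f,B)$ and $TC^P(g,D)$ into a cover of the product target space $X \times Z \times f(B) \times g(D)$, exactly in the spirit of Farber's classical product inequality $TC(X \times Y) \le TC(X) + TC(Y)$. Suppose $TC^P(f,B) = m$ and $TC^P(g,D) = n$, so that $X \times f(B)$ admits an open cover $U_0, \ldots, U_m$ with partial sections $s_i \colon U_i \to P_{X \times B}$ of $\pi_{f(B)}$, and $Z \times g(D)$ admits an open cover $V_0, \ldots, V_n$ with partial sections $t_j \colon V_j \to P_{Z \times D}$ of $\pi_{g(D)}$. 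Note that $P_{(X \times Z) \times (B \times D)}$ can be identified with $P_{X \times B} \times P_{Z \times D}$ via the homeomorphism sending a path $\gamma$ in $X \times Z$ to the pair of its coordinate paths, and under this identification $(f \times g)(B \times D) = f(B) \times g(D)$. The coordinate-wise product $s_i \times t_j$ therefore gives a section over the product open set $U_i \times V_j \subseteq (X \times f(B)) \times (Z \times g(D)) \cong (X \times Z) \times (f(B) \times g(D))$.

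The main structural step is the standard Farber reindexing argument. For each integer $0 \le k \le m+n$, I would define
\[
W_k \;=\; \bigcup_{i+j = k} U_i \times V_j,
\]
and show first that the $W_k$ cover the product target space, and second that $\pi_{(f \times g)(B \times D)}$ admits a partial section over each $W_k$. The covering claim is immediate since every point lies in some $U_i \times V_j$. For the sectioning claim, I would argue that within a fixed $W_k$ the summands $U_i \times V_j$ (over the finitely many pairs with $i + j = k$) are pairwise disjoint: if $(i,j) \neq (i',j')$ both satisfy $i + j = i' + j' = k$, then either $i \neq i'$ or $j \neq j'$, but one cannot have both $U_i \times V_j$ and $U_{i'} \times V_{j'}$ overlap unless the indices coincide in each factor — and here the openness of $f$ and $g$, together with the normality of $Y$ and $W$, is what lets one shrink the given covers to ones whose same-sum pieces are disjoint (the usual trick of replacing an open cover by a cover in which the sets at each level of the nerve are separated). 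The sections $s_i \times t_j$ then glue over the disjoint pieces to a single continuous section on $W_k$, yielding $m + n + 1$ open sets with partial sections, hence $TC^P(f \times g, B \times D) \le m + n$.

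The hard part, and the reason the hypotheses on openness and normality appear, is justifying that the same-sum pieces can be taken to be disjoint (or at least separated) so that the coordinate-wise product sections glue continuously; without this one only gets sections on the individual rectangles $U_i \times V_j$, not on their union $W_k$. Farber's original proof handles this by a shrinking/separation argument, and I expect the normality of $Y$ and $W$ to enter precisely here, via a Urysohn-type separation of the relevant closed pieces, while openness of $f$ and $g$ ensures that $U_i \times V_j$ remains open after transporting through the maps and that the identification $P_{X \times B} \times P_{Z \times D} \cong P_{(X\times Z)\times(B\times D)}$ restricts correctly to the targeted path spaces. Once the separation is in place, the remaining verifications — continuity of $s_i \times t_j$, the endpoint conditions placing the terminal point in $B \times D$, and the commutation $\pi_{(f\times g)(B\times D)} (s_i \times t_j) = \mathrm{id}$ — are routine coordinate-wise checks inherited from the two factors.
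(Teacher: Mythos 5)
Your opening move --- identifying $P_{(X\times Z)\times(B\times D)}$ with $P_{X\times B}\times P_{Z\times D}$ so that $\pi_{(f\times g)(B\times D)}$ becomes, up to a shuffle of base factors, the product map $\pi_{f(B)}\times\pi_{g(D)}$ --- is exactly the paper's first step. But after that the paper proves nothing by hand: it writes the three targeted complexities as $Secat$ of these maps and then cites Scott's Proposition 2.4, namely $secat(p\times p')\leq secat(p)+secat(p')$ for fibrations, to conclude. You instead set out to re-prove that product inequality from scratch via Farber's reindexing of covers. That would be a legitimate (if redundant) alternative route, except that your execution of the single step carrying all the content is incorrect.

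The gap is your disjointness claim. Two rectangles $U_i\times V_j$ and $U_{i'}\times V_{j'}$ with $(i,j)\neq(i',j')$ and $i+j=i'+j'$ intersect precisely when $U_i\cap U_{i'}\neq\emptyset$ and $V_j\cap V_{j'}\neq\emptyset$, which happens all the time; nothing forces ``the indices to coincide in each factor.'' Worse, no shrinking can repair this: if, say, $TC^P(f,B)=TC^P(g,D)=1$, then any two-set open cover of the connected space $X\times f(B)$ must have $U_0\cap U_1\neq\emptyset$ (else it disconnects the space), likewise for $V_0,V_1$, so the level-one rectangles $U_0\times V_1$ and $U_1\times V_0$ overlap in $(U_0\cap U_1)\times(V_0\cap V_1)\neq\emptyset$ no matter how the covers are shrunk. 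The standard argument (Schwarz, Farber) is genuinely different: one takes partitions of unity $\{\phi_i\}$, $\{\psi_j\}$ subordinate to the two covers --- this is where normality of the \emph{covered} spaces enters, i.e.\ of $X\times f(B)$ and $Z\times g(D)$, not of $Y$ and $W$ per se --- and for index sets $S,T$ defines the open set $W(S,T)$ of points where $\phi_i\psi_j$ is strictly larger for every $(i,j)\in S\times T$ than for every $(i',j')\notin S\times T$. These sets, not the rectangles, are pairwise disjoint at each level $|S|+|T|=k$ (by the strict inequalities), each lies inside some $U_i\times V_j$ and so admits a section, and their level-unions cover. Since you explicitly label this step ``the hard part'' and defer it to an unspecified separation argument, the proposal is not a complete proof. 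A minor additional point: ``openness of $f$ and $g$'' is not among the hypotheses and plays no role; the hypothesis is that they are fibrations, which is what makes the relevant path maps fibrations so that the cited sectional-category inequality applies.
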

\begin{proof}
By Definition \ref{d2},  $TC^P(f,B)= Secat(\pi_{f(B)})$ and $TC^P(g,D)=Secat(\pi_{g(D)})$, where $\pi_{f(B)}: P_{X\times B}\to X\times f(B)$ and $\pi_{g(D)}: P_{Z\times D}\to Z\times g(D)$ are natural path fibrations. On the other hand, $TC^P(f\times g,B\times D)= Secat(\pi_{f\times g(B\times D)})$, where $\pi_{f\times g(B\times D)}: P_{(X\times Z)\times (B\times D)}\to (X\times Z)\times (f(B)\times g(D))$ is a path fibration. One can check that two maps $\pi_{(f \times g) (B \times D)}$ and $\pi_{f(B)} \times \pi_{g(D)}$ can be considered to be equal.
Therefore, $Secat(\pi_{f\times g(B\times D)})= Secat(\pi_{f(B)}\times \pi_{g(D)})$. Moreover, by \cite[Proposition 2.4]{bib7},  the inequality $secat (p\times p^{\prime})\leq secat p + secat p^{\prime} $ holds for each pair of fibration maps.
\end{proof}

\section{Comparison with other topological complexities}

In this section, we compare the targeted complexity with other topological complexities, for instance with the relative topological complexity of pair of spaces defined by Short and with  Scott's topological complexity of maps. As a natural fact, the targeted complexity is less than the Pave\v{s}ic's  complexity of a map as follows.
Note that if $f$ is a fibration, then $TC^{P}(f)= Secat(\pi_f)$ (see \cite{bib1}).

\begin{proposition}\label{pr4.7n}
If $f:X\to Y$ is a fibration, then
$TC^P(f,B)\leq TC^{P}(f)$.
\end{proposition}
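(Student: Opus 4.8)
The plan is to manufacture a targeted motion-planning cover by cutting down an optimal untargeted one. Since $f$ is a fibration, the remark preceding the statement gives $TC^{P}(f) = Secat(\pi_f)$, so writing $TC^{P}(f) = n$ I may assume a genuine open cover $U_0, \ldots, U_n$ of $X \times Y$ together with honest sections $s_i : U_i \to PX$ of $\pi_f$, that is $\pi_f s_i = id$ over $U_i$. My first step is precisely to invoke the fibration hypothesis for this upgrade from $sec(\pi_f)$ to an open cover with sections, because Definition~\ref{d2} measures $TC^{P}(f,B)$ by $Secat(\pi_{f(B)})$ and the comparison is cleanest when carried out through honest sections on both sides.

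Next I would restrict these data to the target slab. As $f(B) \subseteq Y$, the sets $V_i := U_i \cap (X \times f(B))$ form an open cover of $X \times f(B)$ with $n+1$ members, matching the desired bound, and the natural candidates for targeted sections are the restrictions $s_i' := s_i|_{V_i}$. For $(x,y) \in V_i$ with $y = f(b)$ and $b \in B$, the path $\alpha = s_i(x,y)$ satisfies $\alpha(0) = x$ and $f(\alpha(1)) = y$, which is exactly the identity $\pi_{f(B)}(\alpha) = (x,y)$ one needs. So the bookkeeping of the cover and of the basepoint equation is routine.

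The step I expect to be the main obstacle is the single feature distinguishing this from the identity case: to lie in the domain $P_{X\times B}$ of $\pi_{f(B)}$ the endpoint $\alpha(1)$ must belong to $B$, whereas a section of $\pi_f$ controls only $f(\alpha(1)) = y$ and hence pins $\alpha(1)$ down only up to the fibre $f^{-1}(y)$. When $B$ is saturated, that is $B = f^{-1}(f(B))$ (in particular for $B = X$, recovering $TC^{P}(f,B) = TC^{P}(f)$, and for $f = id_X$, where the second base coordinate is the literal endpoint), membership $\alpha(1) \in B$ is automatic and the restricted sections already witness $Secat(\pi_{f(B)}) \leq n$. In general I would use the fibration property of $f$ to repair the tail of $\alpha$: over the constant task $y$ one reroutes $\alpha(1)$ through the fibre $f^{-1}(y)$ into $B$ and concatenates, leaving the $\pi_{f(B)}$-image and the initial point unchanged. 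Proving that such a fibrewise correction can be chosen continuously in $(x,y)$ is the heart of the matter and the precise place where both the fibration hypothesis and the way $B$ sits inside $X$ must enter; granting it, the cover $\{V_i\}_{i=0}^{n}$ yields $TC^{P}(f,B) = Secat(\pi_{f(B)}) \leq n = TC^{P}(f)$.
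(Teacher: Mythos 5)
You have put your finger on exactly the right spot, but your proposal does not close it: the step you ask the reader to ``grant'' --- continuously rerouting the endpoint of $s_i(x,y)$ within the fibre $f^{-1}(y)$ so that it lands in $B$ --- is the entire difficulty, and in fact no such continuous correction exists in general. Take $X=[0,1]\times S^1$, $Y=[0,1]$, $f$ the projection (a trivial bundle, hence a fibration), and $B=\{(t,e^{2\pi i t}) : t\in[0,1]\}$, the graph of a loop winding once around the cylinder, so that $f(B)=Y$ and $B\cap f^{-1}(y)$ is the single point $(y,e^{2\pi i y})$. Then $\pi_f$ admits a global section, $s((t,z),y)(\tau)=\bigl((1-\tau)t+\tau y,\, z\bigr)$, so $TC^{P}(f)=0$ in the reduced count of Definition~\ref{d2}; but a global section of $\pi_{f(B)}$ would have to send $(x,0)$ to a path from $x$ ending at $(0,1)$, and its adjoint would be a contraction of the cylinder, which is impossible since $X\simeq S^1$. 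Hence $TC^{P}(f,B)\geq 1 > TC^{P}(f)$ when both sides are counted in the same convention, so the inequality itself fails under the literal definition of $P_{X\times B}$ (paths ending in $B$), and no fibrewise repair can rescue your restriction argument. Your parenthetical remark is the correct salvage: if $B=f^{-1}(f(B))$, endpoints of the restricted sections lie in $B$ automatically and your first two steps already finish the proof.

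For comparison, the paper's own proof is a two-line pullback argument: it asserts that $\pi_{f(B)}$ is the pullback of $\pi_f$ along the inclusion $X\times f(B)\hookrightarrow X\times Y$ and then invokes monotonicity of $Secat$ under pullbacks. That assertion hides precisely the issue you isolated: the genuine pullback of $\pi_f$ along this inclusion has total space $\{\gamma\in PX : \gamma(1)\in f^{-1}(f(B))\}$, which contains $P_{X\times B}$ as a proper subspace whenever $B$ is not saturated (constant paths at points of $f^{-1}(f(B))\setminus B$ lie in the pullback but not in $P_{X\times B}$). So the paper's argument, like your restriction step, is valid exactly in the saturated case $B=f^{-1}(f(B))$ (or under a reinterpretation of the domain of $\pi_{f(B)}$ as paths ending in $f^{-1}(f(B))$), and the cylinder example shows the discrepancy is not merely formal. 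In short: your attempt is not a complete proof, but the gap you refused to paper over is a genuine one, and it is the same gap buried in the published proof; closing it requires an extra hypothesis on $B$, not more cleverness with the fibration property.
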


\begin{proof}
Similar to the proof of \cite[Corollary 2.6]{bib4}, $\pi_{f(B)}$ is the pullback of the map $\pi_f$ induced by the inclusion map $X \times f(B) \hookrightarrow X \times Y$. Then by \cite[Proposition 2.5]{bib4}, $Secat (\pi_{f(B)}) \le Secat (\pi_f)$, and the inequality holds.

\end{proof}

\begin{example}
\begin{itemize}
\item
If $B=\emptyset $, then $TC^P(f,B)= \infty$ for $f : X \to Y$.
\item
If $B$ is a singleton set, then $TC^{P}(f, B) = cat (f)$ for any map $f:x \to Y$.
Recall that
the category of a map $f$, $cat(f)$, is the least $n \le \infty$
such that the domain can be covered by $n + 1$ open sets on each of
which the restriction of $f$ is homotopically trivial.
\item
If $f:X \to Y$ is a constant map, then for any $B \subseteq X$, $TC^P(f, B) = 0$, because $TC^p(f, B) \le TC^P(f) = 0$ by \cite{bib1}.
\item
Consider the space $X = S^2$ and $B$ as the upper half of the sphere. Let $f:X \to X$ be the map contracting $B$ to the north pole of the sphere  (Figure \ref{fi5.1nn}). Then $TC^{P} (f, B) = cat (f) = cat (S^2) = 1$.

\end{itemize}

\begin{figure}[!ht]
\centering
\includegraphics[scale=.6]{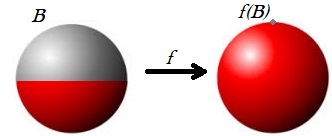}
\caption{Map contracts the upper half to the north pole of the sphere.}\label{fi5.1nn}
\end{figure}

\end{example}

In the following proposition, we compare  the relative topological complexity  of a pair of spaces $(X,B)$ and the targeted complexity of a map for a given injective map.

\begin{proposition}\label{pr4.4n}
Let $f: X\to Y$ be an injective map and let $B\subseteq X$. Then
$TC(X,B) \leq TC^P(f,B)$ and the equality holds if $f$ is the identity map.
\end{proposition}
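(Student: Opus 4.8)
```latex
\begin{proof}[Proof proposal]
The plan is to build an explicit comparison between the two fibrations $\pi_B : P_{X \times B} \to X \times B$ (defining $TC(X,B)$) and $\pi_{f(B)} : P_{X \times B} \to X \times f(B)$ (defining $TC^P(f,B)$), exploiting that $f$ is injective. First I would observe that both fibrations share the \emph{same} total space $P_{X \times B}$, the space of paths in $X$ ending in $B$; they differ only in their base spaces and in how the terminal endpoint is recorded. The map $\pi_B$ sends $\gamma \mapsto (\gamma(0), \gamma(1))$, reading off the actual endpoint in $B$, while $\pi_{f(B)}$ sends $\gamma \mapsto (\gamma(0), f(\gamma(1)))$, recording the image of that endpoint under $f$.

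The key step is to set up a homeomorphism between the two base spaces that is compatible with the projections. Since $f$ is injective, its restriction $f|_B : B \to f(B)$ is a continuous bijection, and the induced map $\mathrm{id}_X \times f|_B : X \times B \to X \times f(B)$ is also a continuous bijection. Under this map the two fibrations correspond: for every $\gamma \in P_{X\times B}$ one has
\[
(\mathrm{id}_X \times f|_B)\bigl(\pi_B(\gamma)\bigr) = (\mathrm{id}_X \times f|_B)(\gamma(0),\gamma(1)) = (\gamma(0), f(\gamma(1))) = \pi_{f(B)}(\gamma).
\]
Thus a section $s_i : U_i \to P_{X\times B}$ of $\pi_{f(B)}$ over an open set $U_i \subseteq X \times f(B)$ pulls back, via the continuous map $\mathrm{id}_X \times f|_B$, to a section of $\pi_B$ over $(\mathrm{id}_X \times f|_B)^{-1}(U_i) \subseteq X \times B$, exactly as in the pullback argument used in Proposition \ref{p1}. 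Since preimages of an open cover under a continuous map form an open cover, any $f$-motion-planning cover of $X \times f(B)$ transfers to a cover of $X \times B$ of the same cardinality admitting sections of $\pi_B$. Taking the minimum over all such covers yields $TC(X,B) = \mathrm{genus}(\pi_B) \leq \mathrm{genus}(\pi_{f(B)}) = TC^P(f,B)$.

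The main obstacle I anticipate is that injectivity alone only gives a continuous bijection $f|_B$, not a homeomorphism, so the reverse pullback need not be continuous and the inequality genuinely goes in one direction. This is why the statement asserts only $TC(X,B) \leq TC^P(f,B)$ in general. For the equality case, when $f = \mathrm{id}_X$ the map $\mathrm{id}_X \times f|_B$ is literally the identity, the two fibrations $\pi_B$ and $\pi_{f(B)}$ coincide on the nose, and hence $TC(X,B) = TC^P(\mathrm{id}_X, B)$ by the very definitions; this was already noted after Definition \ref{d2}. I would present the general inequality first via the pullback/continuous-bijection transfer of sections, then dispatch the equality as the degenerate case $f|_B = \mathrm{id}_B$.
\end{proof}
```
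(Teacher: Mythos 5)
Your proposal is correct and takes essentially the same route as the paper: both proofs transfer each section $s_i$ of $\pi_{f(B)}$ to a section of $\pi_B$ by precomposing with $1 \times f$ over the preimage open sets $V_i = (1\times f)^{-1}(U_i)$, with injectivity of $f$ guaranteeing that the transferred map really is a section of $\pi_B$ (the paper checks this pointwise, via $f(x_3) = f(x_2) \Rightarrow x_3 = x_2$ for the path's endpoint; you phrase it structurally by cancelling the injective map $\mathrm{id}_X \times f|_B$ in the commutation square). The equality case $f = \mathrm{id}_X$ is handled identically in both: the two fibrations then coincide, so the sectional categories agree by definition.
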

\begin{proof}
Let $ TC^P(f,B)= n$. Then for some open subsets $U_0, U_1, \ldots, U_n $ of $X\times f(B)$, there are $s_i: U_i \to P_{X\times B}$ such that $\pi_{f(B)} s_i=id_{U_i} $.
Set $V_i= (1\times f)^{-1}(U_i)$ contained in $X\times B$, since $f$ is injective. Moreover, define $s_i ^{\prime}: V_i \to P_{X\times B}$ by  $s_i^{\prime}(x_1, x_2)= s_i (1\times f) (x_1, x_2)$. We  show that $\pi_B s_i^{\prime}= id_{V_i}$, where $\pi_B: P_{X\times B} \to X\times B$ maps each path to its start and endpoints. Let $(x_1,x_2)\in V_i$.  Then $\pi_B s_i^{\prime}(x_1,x_2)= \pi_B s_i (1\times f)(x_1,x_2)= \pi_B s_i (x_1, f(x_2))$. Note that $s_i$ maps each $(x, y)$ to a path in $X$ whose endpoint, namely $x_3$, belongs to $B$. Moreover, since $\pi_{f(B)} s_i=id_{U_i} $,  so $f(x_3) = y$. Thus $s_i (x_1, f(x_2))$ is a path in $X$ starting at $x_1$ and ending at $x_3$, where $f(x_3) = f(x_2)$. Since $f$ is an injection, $x_2 =  x_3$. Also, since $\pi_B$ maps each path to its endpoint, $\pi_B s_i (x_1, f(x_2)) = (x_1, x_3) = (x_1, x_2)$. Then  $\pi_B s_i^{\prime} = id_{V_i}$. Therefore $TC(X,B) \leq TC^P(f,B)$.

Now let $f = id_X$. Then in Definition \ref{d2}, we have $f(B) = B$ and $\pi_{f(B)}= \pi_B$. Hence $TC^{P} (f, B) = Secat (\pi_{f(B)}) = Secat (\pi_B) = TC (X, B)$.
\end{proof}

Furthermore, in the following proposition, we present an inequality for fibration maps.


\begin{proposition}
Let $f: X\to Y$ be a fibration map, let $B\subseteq X$, and let $f(B)=C\subseteq Y$. Then $TC^P(f,B)\leq TC(Y,C)$.
\end{proposition}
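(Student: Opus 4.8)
The plan is to imitate the pullback argument used for Proposition \ref{pr4.7n}, but now comparing $\pi_{f(B)}$ with Short's fibration $\pi_C\colon P_{Y\times C}\to Y\times C$ rather than with $\pi_f$. Since $f(B)=C$, both targeted fibrations live over products with the \emph{same} second factor: the base of $\pi_{f(B)}$ is $X\times f(B)=X\times C$, while the base of $\pi_C$ is $Y\times C$. The obvious bridge between these bases is the map $f\times 1_C\colon X\times C\to Y\times C$, $(x,c)\mapsto (f(x),c)$, and my first step is to pull the open cover realizing $TC(Y,C)=Secat(\pi_C)$ back along $f\times 1_C$.

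Concretely, I would write $TC(Y,C)=n$ and fix open sets $W_0,\dots,W_n$ covering $Y\times C$ together with sections $\sigma_i\colon W_i\to P_{Y\times C}$ of $\pi_C$. Putting $U_i=(f\times 1_C)^{-1}(W_i)$ gives an open cover of $X\times C$. For $(x,c)\in U_i$ the path $\sigma_i(f(x),c)$ runs in $Y$ from $f(x)$ to $c\in C$. I would then view these paths as a homotopy $H\colon U_i\times I\to Y$ with $H((x,c),0)=f(x)$ and invoke the homotopy lifting property of the fibration $f$ against the map $(x,c)\mapsto x$ on $U_i$. This yields $\widetilde H\colon U_i\times I\to X$ with $f\widetilde H=H$ and $\widetilde H((x,c),0)=x$; setting $s_i(x,c)=\widetilde H((x,c),-)$ produces, for each $(x,c)$, a path in $X$ that starts at $x$ and satisfies $f(s_i(x,c)(1))=c$, so that $\pi_{f(B)}s_i=id$ \emph{provided} each $s_i(x,c)$ really lands in $P_{X\times B}$.

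The structural way to package the same idea, matching the proof of Proposition \ref{pr4.7n}, is to check that $\pi_{f(B)}$ is fiber-homotopy equivalent to the pullback $(f\times 1_C)^{*}\pi_C$, whose total space is $\{(x,\delta)\colon \delta\in PY,\ \delta(0)=f(x),\ \delta(1)\in C\}$. The correspondence $\gamma\mapsto(\gamma(0),f\circ\gamma)$ carries $P_{X\times B}$ into this pullback over $X\times C$, and the homotopy lifting property of $f$ is the natural candidate for a homotopy inverse. Once such an identification is in place, the pullback of a fibration is a fibration, so Short's monotonicity of sectional category under pullback \cite[Proposition 2.5]{bib4} gives $Secat(\pi_{f(B)})\le Secat(\pi_C)$, which is exactly $TC^P(f,B)\le TC(Y,C)$.

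The main obstacle is precisely the endpoint condition flagged above: the fibration controls only the \emph{initial} point of each lifted path, forcing $s_i(x,c)(1)$ into the fiber $f^{-1}(c)$, whereas membership in $P_{X\times B}$ demands $s_i(x,c)(1)\in B$, and in general $B\subsetneq f^{-1}(C)$. Thus the delicate point is to show that the comparison map $P_{X\times B}\to(f\times 1_C)^{*}\pi_C$ is a genuine fiber homotopy equivalence rather than merely a map over the base, i.e.\ that the lifted terminal points can be kept inside the prescribed target $B$ and not just inside the full preimage $f^{-1}(C)$. I expect this verification, carried out through the lifting function of $f$ and the hypothesis $f(B)=C$, to be the heart of the argument; the pullback bookkeeping and the appeal to \cite[Proposition 2.5]{bib4} are then routine.
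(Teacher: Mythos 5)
You have correctly located the fatal point, but the gap you flagged cannot be closed: the stated inequality is false in general, so no refinement of the lifting-function argument will work. The homotopy lifting property of $f$ controls only the initial point of a lift; the terminal point of the lifted path lands somewhere in the fiber $f^{-1}(c)$, and nothing in the hypothesis $f(B)=C$ lets you push it into $B$ continuously. Concretely, take $X=T^2$, $Y=\{\ast\}$, $f$ the constant map (which is a fibration), and $B=\{x_0\}$, so that $C=\{\ast\}=f(B)$. Then $P_{X\times B}$ is the space of paths in $T^2$ ending at $x_0$, and $\pi_{f(B)}\colon P_{X\times B}\to X\times C\cong T^2$ is the based path fibration $\gamma\mapsto\gamma(0)$, whose sectional category is $cat(T^2)$ by Schwarz's theorem; meanwhile $TC(Y,C)$ is the sectional category of the trivial fibration over a point. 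In the reduced convention this reads $TC^P(f,B)=2$ versus $TC(Y,C)=0$ (and $3$ versus $1$ unreduced), so the inequality fails by a margin that no off-by-one discrepancy between conventions can absorb. Equivalently: your comparison map $P_{X\times B}\to(f\times 1_C)^{*}\pi_C$, $\gamma\mapsto(\gamma(0),f\circ\gamma)$, is not a fiber homotopy equivalence here, since the pullback $(f\times 1_C)^{*}\pi_C$ is just the identity map of $T^2$, while $\pi_{f(B)}$ has arbitrarily large sectional category as $X$ varies.

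You should also know that the paper's own proof commits exactly the error you were cautious about: it simply asserts that $\pi_{f(B)}$ \emph{is} the pullback of $\pi_C\colon P_{Y\times C}\to Y\times C$ induced by $f$, and then applies monotonicity of $Secat$ under pullback from \cite{bib4}. That identification is wrong: the pullback's total space consists of pairs $(x,\delta)$ with $\delta$ a path in $Y$ from $f(x)$ into $C$, not of paths in $X$ ending in $B$, and the example above shows the two maps need not have the same sectional category. So your instinct that the fiber-homotopy-equivalence verification is the heart of the argument was exactly right; that heart is missing from the paper as well, and it cannot be supplied. The statement does become true, by essentially your lifting-function argument carried out verbatim, under the stronger hypothesis $B=f^{-1}(C)$: then the lifted endpoint automatically lies in $f^{-1}(c)\subseteq B$, and continuity follows from a continuous lifting function for the Hurewicz fibration $f$. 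The special case $B=X$, $C=Y$ recovers the correct inequality $TC^P(f)\le TC(Y)$ for fibrations.
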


\begin{proof}
Since $\pi_{f(B)}$ is the pullback of the fibration $\pi:P_{Y \times C} \to Y \times C$ induced by $f: (X, B) \to (Y, C)$,  the inequality $Secat(\pi_{f(B)}) \le Secat (\pi)$ holds  by \cite[Proposition 2.3]{bib1}. Then  $TC^{P}(f, B) \le  TC(Y, C)$.
\end{proof}

In the following proposition, we compare $TC^S(f)$ with the targeted complexity of a map.
\begin{proposition}\label{pr4.9n}
Let $f: X\to Y$ be a map and let $B\subseteq X$. Then $TC^S(f)\leq TC^P(f,B)$.
\end{proposition}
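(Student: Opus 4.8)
The plan is to reduce Scott's complexity to the targeted complexity by pushing the configuration-space paths produced by a partial section forward through $f$, in the spirit of the pullback constructions in Propositions \ref{p1} and \ref{pr4.4n}. Write $TC^P(f,B)=Secat(\pi_{f(B)})=n$, so there is an open cover $U_0,\ldots,U_n$ of $X\times f(B)$ together with partial sections $s_i:U_i\to P_{X\times B}$ satisfying $\pi_{f(B)}s_i=id_{U_i}$. For each $(x,y)\in U_i$ the relation $\pi_{f(B)}s_i(x,y)=(x,y)$ forces $s_i(x,y)$ to be a path in $X$ from $x$ to some point $b\in B$ with $f(b)=y$; hence its image $f\circ s_i(x,y)$, i.e. the path $t\mapsto f\big(s_i(x,y)(t)\big)$, is a path in $Y$ from $f(x)$ to $y$. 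This elementary observation is what turns a targeted section into candidate $f$-motion data.

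Next I would transport the cover to $X\times X$ along the map $1\times f:X\times X\to X\times Y$, $(x_0,x_1)\mapsto(x_0,f(x_1))$, setting $V_i=(1\times f)^{-1}(U_i)$, which are open in $X\times X$ by continuity of $f$. On each $V_i$ define $g_i:V_i\to Y^I$ by $g_i(x_0,x_1)=f\circ s_i\big(x_0,f(x_1)\big)$. This $g_i$ is continuous, being a composite of $1\times f$, the section $s_i$, and the postcomposition $P_{X\times B}\to Y^I$ induced by $f$ (continuous in the compact-open topology). To see that it is an $f$-motion planner in Scott's sense, note that $\pi_{f(B)}s_i(x_0,f(x_1))=(x_0,f(x_1))$ makes $s_i(x_0,f(x_1))$ start at $x_0$ and end at a point $b\in B$ with $f(b)=f(x_1)$; therefore $g_i(x_0,x_1)(0)=f(x_0)$ and $g_i(x_0,x_1)(1)=f(b)=f(x_1)$, exactly as required. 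Thus each $V_i$ carries an $f$-motion planner.

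The step I expect to be the real obstacle is verifying that the $V_i$ actually cover $X\times X$. Since $1\times f$ sends $X\times X$ into $X\times f(X)$ while the $U_i$ cover only $X\times f(B)$, the preimages $V_i$ cover precisely $X\times f^{-1}(f(B))=\lbrace(x_0,x_1):f(x_1)\in f(B)\rbrace$, which equals $X\times X$ if and only if $f(B)=f(X)$. When this holds\,---\,in particular for $B=X$, or whenever the target set already meets every fibre of $f$\,---\,the cover $V_0,\ldots,V_n$ is genuine and yields $TC^S(f)\le n=TC^P(f,B)$. For a general target set the coverage gap off $X\times f^{-1}(f(B))$ is the crux: the construction supplies no $f$-motion planner over pairs $(x_0,x_1)$ with $f(x_1)\notin f(B)$, and the argument must be supplemented there (for instance by enlarging the cover on the complementary region or by an independent estimate) before the stated inequality can be asserted in full generality. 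Securing that supplement, or else identifying the precise hypothesis on $B$ under which the $V_i$ exhaust $X\times X$, is where I would concentrate the effort.
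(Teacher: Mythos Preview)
Your construction is line-for-line the paper's: it too sets $V_i=(1\times f)^{-1}(U_i)\subseteq X\times X$, defines the $f$-motion planner as $f_\ast\, s_i\,(1\times f)$, and then simply concludes $TC^S(f)\le n$. The coverage issue you single out is precisely the step the paper omits.

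That gap is genuine and cannot be repaired in full generality. Since the $U_i$ cover only $X\times f(B)$, the $V_i$ cover only $X\times f^{-1}(f(B))$, and this equals $X\times X$ iff $f(X)=f(B)$. Without that hypothesis the stated inequality can fail outright: take $X=Y=S^2$, $f=\mathrm{id}$, and $B=\{p\}$ a single point. Then $TC^P(f,B)$ is the least $n$ for which $S^2\cong S^2\times\{p\}$ admits a cover by $n{+}1$ open sets each carrying a continuous choice of path to $p$, i.e.\ $cat(S^2)=1$; but $TC^S(\mathrm{id}_{S^2})=TC(S^2)=2$. So your instinct is correct: both your argument and the paper's establish the inequality only under the extra assumption $f(B)=f(X)$, and no ``supplement over the complementary region'' can rescue the general statement.
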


\begin{proof}
Let $TC^P(f,B)=n$. Then for some open subsets $U_0, U_1, \ldots, U_n $ of $X\times f(B)$, there are $s_i: U_i \to P_{X\times B}$ such that $\pi_{f(B)} s_i=id_{U_i} $. We set $V_i= (1\times f)^{-1}(U_i)\subseteq X\times X$ and define $f_{V_i}: V_i \to Y^I$ by $f_{V_i}(x_1,x_2)= f_\ast s_i (1\times f)(x_1,x_2)$ in which $f_\ast: X^I \to Y^I$ is defined by $f_\ast(\gamma)= f\gamma$. Therefore $TC^S(f)\leq n$.
\end{proof}

Many kinds of topological complexities are trivial for the null objects. In the following result, we find a two-sided statement for the targeted complexity of maps.

\begin{corollary}\label{co4.6nn}
Let $f:X\to Y$ be any fibration map and let $B\subseteq X$. Then $TC^P(f,B)=0$ if and only if $f$ is nullhomotopic.
\end{corollary}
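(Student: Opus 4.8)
The plan is to unwind the meaning of $TC^P(f,B)=0$ and then treat the two implications separately, since they have quite different flavors. By Definition \ref{d2}, $TC^P(f,B)=0$ means that the single open set $U_0=X\times f(B)$ admits a partial section of $\pi_{f(B)}$; equivalently, $\pi_{f(B)}\colon P_{X\times B}\to X\times f(B)$ admits a global (total) section $s\colon X\times f(B)\to P_{X\times B}$ with $\pi_{f(B)}\,s=\mathrm{id}$.

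For the forward implication, suppose such a global section $s$ exists. I would fix a basepoint $b_0\in B$ and define $\Phi\colon X\times I\to X$ by $\Phi(x,t)=s\big(x,f(b_0)\big)(t)$, which is continuous because $s$ is continuous and path evaluation $X^I\times I\to X$ is continuous. Since $s(x,f(b_0))\in P_{X\times B}$ and $\pi_{f(B)}\,s(x,f(b_0))=(x,f(b_0))$, the path $\Phi(x,\cdot)$ starts at $x$ and ends at a point $g(x):=\Phi(x,1)\in B$ with $f(g(x))=f(b_0)$. Thus $\Phi$ is a homotopy $\mathrm{id}_X\simeq g$ in $X$, and post-composing with $f$ yields $f=f\circ\mathrm{id}_X\simeq f\circ g\equiv f(b_0)$; hence $f$ is nullhomotopic. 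Note that this direction uses neither that $f$ is a fibration nor any structure of $B$ beyond $B\neq\emptyset$; alternatively it follows from Proposition \ref{pr4.9n} together with the fact that $TC^S(f)=0$ forces $f$ to be nullhomotopic, obtained by restricting a global $f$-motion planner to a slice $X\times\{x_0\}$.

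For the converse, suppose $f$ is nullhomotopic via $H\colon X\times I\to Y$ with $H_0=f$ and $H_1\equiv c$, and here I would genuinely use that $f$ is a fibration. Applying the homotopy lifting property to $H$ with initial lift $\mathrm{id}_X$ of $H_0=f$ produces $\widetilde H\colon X\times I\to X$ with $\widetilde H_0=\mathrm{id}_X$ and $f\widetilde H=H$; then $r:=\widetilde H_1$ maps $X$ into the fiber $F:=f^{-1}(c)$, and for each $x$ the lifted path $\widetilde H(x,\cdot)$ joins $x$ to $r(x)\in F$. The desired section $s(x,y)$ should then be assembled by concatenating the lifted path from $x$ down into $F$, a path inside $F$, and a reversed lifted path that climbs back up and lands at a point $b\in B$ with $f(b)=y$.

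The main obstacle is precisely this last step of the converse: to obtain a continuous section $s$ on all of $X\times f(B)$ one must choose, continuously in $y\in f(B)$, both a preimage $b\in B$ with $f(b)=y$ (that is, a section of $f|_B$ over $f(B)$) and a path inside the fiber $F$ joining the two relevant lifts, and these continuous selections are exactly what can fail in general. I would therefore aim either to carry out this selection directly under the standing hypotheses, or to reduce via Proposition \ref{pr4.7n} to proving $TC^P(f)=0$ for a nullhomotopic fibration. I expect verifying the continuity of this assembly, rather than any homotopy-theoretic input, to be the delicate point, and it is where the fibration hypothesis must be exploited with care.
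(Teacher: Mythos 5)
Your first implication ($TC^P(f,B)=0\Rightarrow f$ nullhomotopic) is proved correctly and completely: the slice homotopy $\Phi(x,t)=s\bigl(x,f(b_0)\bigr)(t)$ deforms $id_X$ to a map $g$ landing in $B$ with $f\circ g\equiv f(b_0)$, so $f\simeq fg$ is constant. This is more self-contained than the paper's treatment of the same direction, which combines Proposition \ref{pr4.9n} with Scott's criterion that $TC^S(f)=0$ forces $f$ to be nullhomotopic; your parenthetical alternative is exactly the paper's argument. You are also right that this direction needs neither the fibration hypothesis nor anything about $B$ beyond nonemptiness.

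The converse is where the genuine gap lies, and your suspicion that the ``continuous selections'' cannot be made in general is correct in the strongest sense: the implication ``$f$ nullhomotopic fibration $\Rightarrow TC^P(f,B)=0$'' is false, so no assembly of lifted paths can complete your sketch. Take $f\colon\mathbb{R}\to S^1$, $f(x)=e^{2\pi i x}$, and $B=\mathbb{R}$. This is a fibration (a covering map) with contractible domain, hence nullhomotopic; here $P_{X\times B}=P\mathbb{R}$, $f(B)=S^1$, and $\pi_{f(B)}=\pi_f$. If $TC^P(f,B)$ were $0$, a global section $s\colon\mathbb{R}\times S^1\to P\mathbb{R}$ of $\pi_f$ would exist, and then $\sigma(y):=s(0,y)(1)$ would be a continuous section of $f$, which is impossible since $f_*\colon\pi_1(\mathbb{R})\to\pi_1(S^1)$ is zero and cannot have a right inverse. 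More generally, a global section of $\pi_f$ forces $Y$ to be contractible, because $t\mapsto s(x_0,y)(t)$ gives $\sigma\simeq const_{x_0}$ and hence $id_Y=f\sigma\simeq const$; this also defeats your fallback plan of reducing to $TC^P(f)=0$ via Proposition \ref{pr4.7n}. Be aware that the paper's own one-line proof of this direction is equally invalid: it invokes Proposition \ref{pr4.13n}, which would require a constant map homotopic to $f$ that is itself a fibration with the same image of $B$; a constant map to a path-connected space with more than one point is never a fibration, and its image of $B$ is a singleton while $f(B)$ need not be. So only the direction you actually proved is correct as stated; the other direction requires additional hypotheses, and no proof of it --- yours or the paper's --- can succeed without them.
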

\begin{proof}
If $f$ is a nulhomotopic map, then   Proposition \ref{pr4.13n} implies $TC^P(f, B)= 0$. For the converse, using Proposition \ref{pr4.9n}, we have  $TC^S(f)=0$. Therefore it follows from \cite[Corollary 3.9]{bib7} that $f$ is nullhomotopic.
\end{proof}

If $f$ is a fibration, then it follows from Propositions \ref{pr4.7n} and \ref{pr4.9n} that $TC^S(f) \le TC^P(f, B) \le TC^{P} (f)$.
Also, if $f$ admits a homotopy section, then by using \cite[Corollary 3.8]{bib1}, we have $TC^{P} (f) = TC(Y)$. Moreover, if $f$ admits a homotopy section, then $TC^{MW} (f) = TC (Y)$ by \cite[Proposition 2.4]{bib4}. In other words, the following corollary holds.

\begin{corollary}
Let $f:X\to Y$ be a fibration map and admit a homotopy section and let $B\subseteq X$. Then
\[
TC^{P}(f,B) = TC^S(f)= TC^{MW}(f)= TC^{\frac{1}{2}}(f)= TC^{P}(f)=  TC(Y).
\]
\end{corollary}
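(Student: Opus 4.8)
The plan is to realize the corollary as a squeeze: both the smallest complexity $TC^S(f)$ and the largest one $TC^P(f)$ appearing in the list coincide with $TC(Y)$, so every quantity trapped between them is forced to equal $TC(Y)$ as well. The statement is therefore pure bookkeeping on top of results already established in the excerpt, and no genuinely new construction is required.

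First I would assemble the two chains of inequalities already available. Since $f$ is a fibration, Propositions \ref{pr4.7n} and \ref{pr4.9n} give $TC^S(f) \le TC^P(f,B) \le TC^P(f)$, placing the targeted complexity between the Scott and Pave\v{s}ic complexities. Independently, inequality \eqref{eq} gives $TC^S(f) \le TC^{\frac{1}{2}}(f) \le TC^P(f)$, placing the mixed complexity in the \emph{same} interval $[\,TC^S(f),\,TC^P(f)\,]$. Finally, Scott's equivalence theorem identifies $TC^{MW}(f)$ with $TC^S(f)$, so the Murillo--Wu complexity sits at the bottom of both chains.

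Next I would use the homotopy-section hypothesis to collapse the two shared endpoints onto $TC(Y)$. On the one hand, \cite[Corollary 3.8]{bib1} yields $TC^P(f) = TC(Y)$; on the other hand, \cite[Proposition 2.4]{bib4} yields $TC^{MW}(f) = TC(Y)$, and hence $TC^S(f) = TC(Y)$ through the equivalence $TC^S(f) = TC^{MW}(f)$. Thus the two outer bounds of both intervals agree, namely $TC^S(f) = TC^P(f) = TC(Y)$.

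The squeeze then finishes the argument: any term lying between $TC^S(f)$ and $TC^P(f)$ in either chain — that is, $TC^P(f,B)$ and $TC^{\frac{1}{2}}(f)$ — must also equal $TC(Y)$, and together with $TC^{MW}(f) = TC^S(f) = TC(Y)$ this gives the full string of equalities. I expect no serious obstacle, since every ingredient is a cited result; the only point deserving care is the role of Scott's equivalence $TC^{MW}(f) = TC^S(f)$, which is exactly what fuses the two homotopy-section identities \cite[Corollary 3.8]{bib1} and \cite[Proposition 2.4]{bib4} — proved for different invariants — into a single common value against which the squeeze can be run.
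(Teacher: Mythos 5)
Your proposal is correct and follows essentially the same route as the paper: the text immediately preceding the corollary assembles exactly the same sandwich $TC^S(f)\le TC^P(f,B)\le TC^P(f)$ from Propositions \ref{pr4.7n} and \ref{pr4.9n}, collapses the endpoints to $TC(Y)$ via \cite[Corollary 3.8]{bib1} and \cite[Proposition 2.4]{bib4} together with Scott's identification $TC^S(f)=TC^{MW}(f)$, and (implicitly, via inequality \eqref{eq}) squeezes $TC^{\frac{1}{2}}(f)$ in the same interval. Your write-up is, if anything, slightly more explicit than the paper in spelling out the role of \eqref{eq} for the mixed complexity.
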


\section{Scott's complexity with target}

In this section, we study Scott's complexity of maps with some targets. In this regard, we choose the motions whose endpoints belong to the target subset.
Let $f : (X,B) \to (Y,C)$ be a map of pairs of spaces. Then one can consider Scott's complexity of a map together with a subset $B$ as the target, denoted by $TC^S(f,B)$.

\begin{definition}
Let $f: (X, B) \to  (Y, f(B))$ be a map of pairs. Then $TC^S(f, B)$ is the least integer $n\geq 0$ satisfying that $X\times B$ is covered by open subsets $U_0, U_1, \ldots, U_n$ and there is $\sigma_i: U_i\to P_{Y\times f(B)}$, such that $\sigma_i(x,b)(0)= f(x)$ and $\sigma_i(x,b)(1)= f(b)$. If there is no such covering, then we define $TC^S(f, B)= \infty$.
\end{definition}

One of the motivations for defining the  targeted complexity is to optimize complexity of maps. Then at first  it is natural that the following inequality holds.

\begin{proposition}
Let $f:X\to Y$ be a map. Then $TC^S(f, B)\leq TC^{S}(f)$.
\end{proposition}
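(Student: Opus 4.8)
The plan is to prove this by a direct restriction argument, exploiting that $X \times B$ sits inside $X \times X$ as a subspace and that the target-constrained path space $P_{Y \times f(B)}$ carries the subspace topology inherited from $Y^I$. First I would set $TC^S(f) = k$ and invoke the definition of Scott's complexity: there exists an open cover $U_0, \ldots, U_k$ of $X \times X$ together with $f$-motion planners $f_i : U_i \to Y^I$ satisfying $f_i(x_0, x_1)(0) = f(x_0)$ and $f_i(x_0, x_1)(1) = f(x_1)$ for each $i$.

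Next I would intersect each $U_i$ with the subspace $X \times B$, setting $V_i = U_i \cap (X \times B)$. Since each $U_i$ is open in $X \times X$, each $V_i$ is open in $X \times B$, and the collection $\{V_i\}_{i=0}^{k}$ covers $X \times B$ because the $U_i$ cover $X \times X \supseteq X \times B$. I would then define the candidate targeted motion planners by restriction, $\sigma_i := f_i|_{V_i}$, which is continuous as a restriction of a continuous map.

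The one point that must be checked carefully is that $\sigma_i$ actually takes values in $P_{Y \times f(B)}$ rather than merely in $Y^I$, so that it qualifies as a targeted $f$-motion planner in the sense of the definition of $TC^S(f,B)$. For any $(x, b) \in V_i$ we have $b \in B$, and the planner conditions give $\sigma_i(x, b)(0) = f_i(x, b)(0) = f(x)$ and $\sigma_i(x, b)(1) = f_i(x, b)(1) = f(b)$. Since $f(b) \in f(B)$, the path $\sigma_i(x, b)$ ends in $f(B)$, hence lies in $P_{Y \times f(B)}$; and because $P_{Y \times f(B)}$ carries the subspace topology from $Y^I$, the corestriction of $\sigma_i$ to $P_{Y \times f(B)}$ remains continuous. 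Thus each $\sigma_i$ is a genuine targeted planner satisfying the required endpoint conditions $\sigma_i(x,b)(0) = f(x)$ and $\sigma_i(x,b)(1) = f(b)$.

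Collecting the above, the cover $\{V_i\}_{i=0}^{k}$ of $X \times B$ together with the planners $\{\sigma_i\}_{i=0}^{k}$ witnesses that $X \times B$ admits a targeted $f$-motion planning algorithm with $k+1$ open sets, whence $TC^S(f, B) \leq k = TC^S(f)$. I do not expect a serious obstacle here: the argument is purely a restriction of planners from the larger product to the smaller one, and the only technical care needed is the verification that restriction automatically enforces the target constraint (the endpoint lands in $f(B)$), which holds because the second coordinate of any point of $V_i$ lies in $B$.
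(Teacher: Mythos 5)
Your proof is correct and follows essentially the same route as the paper's: intersect the open cover of $X \times X$ with $X \times B$ and restrict the planners, noting that the restricted paths automatically end in $f(B)$. The only difference is that you spell out the corestriction/continuity check that the paper leaves implicit, which is a welcome bit of extra care but not a different argument.
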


\begin{proof}
Let $TC^{S}(f)=n$. Then there are  open subsets $\big\{ U_i\big\}_{i=0}^{i=n}$ of $X\times X$ and $s_i: U_i \to PY$ such that $s_i(x_0,x_1)(0)= f(x_0)$ and $s_i(x_0,x_1)(1)= f(x_1)$. We define $V_i=U_i \cap {X\times B}$, and  $\sigma_i: V_i\to P_{Y\times f(B)}$ is defined  by $\sigma_i = s_i  \vert_{V_i}$. Therefore $TC^S(f, B) \leq TC^S(f)$.
\end{proof}

In the previous section, we see that targeted complexity of Pave\v{s}ic is homotopy invariant for fibrations if target subspaces are equal. However, the targeted complexity of Scott is homotopy invariant without any other phenomenon.

\begin{proposition}
If $f\simeq g: X\to Y$, then $TC^S(f, B)= TC^S(g, B)$.
\end{proposition}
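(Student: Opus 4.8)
The plan is to transport an $f$-motion planning algorithm to a $g$-motion planning algorithm on the \emph{same} open cover, by pre- and post-composing the given paths with the tracks of a homotopy from $f$ to $g$; this is the targeted analogue of Scott's argument for the homotopy invariance of $TC^S(f)$. First I would fix a homotopy $H : X \times I \to Y$ with $H(x,0) = f(x)$ and $H(x,1) = g(x)$. For each $x \in X$ this supplies a path $t \mapsto H(x,t)$ in $Y$ running from $f(x)$ to $g(x)$, and its reverse $t \mapsto H(x,1-t)$ runs from $g(x)$ to $f(x)$.

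Suppose $TC^S(f,B) = n$, so that $X \times B$ is covered by open sets $U_0, \ldots, U_n$ carrying $f$-motion planners $\sigma_i : U_i \to P_{Y \times f(B)}$ with $\sigma_i(x,b)(0) = f(x)$ and $\sigma_i(x,b)(1) = f(b)$. On the same cover I would define $g$-motion planners $\tau_i : U_i \to P_{Y \times g(B)}$ by concatenating, for each $(x,b) \in U_i$, the three paths: the reverse track $t \mapsto H(x,1-t)$ from $g(x)$ to $f(x)$, then $\sigma_i(x,b)$ from $f(x)$ to $f(b)$, and finally the track $t \mapsto H(b,t)$ from $f(b)$ to $g(b)$. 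The resulting path $\tau_i(x,b)$ starts at $g(x)$ and ends at $g(b)$, which is exactly the required $g$-motion planner condition. Since the same cover $\{U_i\}$ serves, this yields $TC^S(g,B) \leq TC^S(f,B)$, and applying the identical construction with $f$ and $g$ interchanged (using the reverse homotopy $t \mapsto H(x,1-t)$, which is a homotopy from $g$ to $f$) gives the opposite inequality, hence equality.

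Two points need verification, and I expect the first to be the only genuine subtlety. First, one must check that $\tau_i$ is continuous as a map $U_i \to P_{Y\times g(B)}$; this follows from the joint continuity of $H$, the continuity of each $\sigma_i$, and the continuity of the path-concatenation operation on $PY$ in the compact-open topology, but the bookkeeping with the reparametrization of the three pieces should be carried out carefully. Second, and crucially for the \emph{targeted} setting, the target path space changes from $P_{Y\times f(B)}$ to $P_{Y\times g(B)}$: one must confirm that the new endpoint lies in $g(B)$. This is automatic, since appending the track $t\mapsto H(b,t)$ moves the endpoint $f(b)$ to $H(b,1)=g(b)\in g(B)$, so $\tau_i$ indeed lands in $P_{Y\times g(B)}$. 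Thus the shift of target set, which might at first appear to obstruct the comparison, is precisely absorbed by the homotopy tracks.
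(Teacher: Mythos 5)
Your proposal is correct and follows essentially the same argument as the paper: conjugate each $f$-motion planner by the homotopy tracks, i.e.\ form $H^-(x,-) * \sigma_i(x,b) * H(b,-)$ on the same open cover, and conclude by symmetry. If anything, you are more careful than the paper's own write-up, which nominally keeps the codomain $P_{Y\times C}$ with $C=f(B)$ even though the new paths end in $g(B)$ --- exactly the point your last paragraph addresses.
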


\begin{proof}
Let $f\simeq g$. Then there is a homotopy $H:X\times \mathbb{I} \to Y$ such that $H(x,0)=f(x)$ and $H(x,1)= g(x)$.
By the symmetry property,  it suffices to prove that $TC^S(g, B) \le TC^S (f, B)$. Hence let $TC^S(f, B)= n$. Then there are open subsets $\lbrace U_i\rbrace_{i=0}^{n}$ of $X\times B$ and $\sigma_i: U_i \to P_{Y\times C}$ such that $\sigma_i(x,b)(0)=f(x)$ and $\sigma_i(x,b)(1)= f(b)$.
Now we set
 $\sigma_i^{\prime}:U_i \to P_{Y\times C}$ by  $\sigma_i^{\prime} (x, b):=
 H^-(x, -)*   \sigma_i (x, b)  * H(b, -)$, where $H^- (x, t) = H(x, 1-t)$. For $t =0$,  it holds that $\sigma_i^{\prime} (x, b)(t)=  H^- (x, 0) = H(x, 1) = g(x)$ and that $\sigma_i^{\prime} (x, b) (1)=  H(b, 1) = g(b)$.
Therefore $TC^S(g, B) \le TC^S(f, B)$. 
\end{proof}

Scott showed that $TC^S  (f) \le TC^P(f)$ for any map. The following proposition shows that it holds for the targeted cases too.

\begin{proposition}
Let $f:X\to Y$ be a map and let $B\subseteq X$. Then $TC^S(f, B)\leq TC^P(f,B)$.
\end{proposition}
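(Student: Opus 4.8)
The plan is to adapt the argument used for Proposition~\ref{pr4.9n}, since the two inequalities are structurally parallel: in both cases one pushes a Pave\v{s}i\'c-style section in $X$ forward through the map $f_\ast : X^I \to Y^I$, $f_\ast(\gamma) = f\gamma$. The only differences are that here the base of the cover is $X \times B$ rather than $X \times X$, and the target of the constructed motion planners is the restricted path space $P_{Y\times f(B)}$ rather than all of $Y^I$.

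First I would set $TC^P(f,B) = n$ and extract open sets $U_0, \ldots, U_n$ covering $X \times f(B)$ together with partial sections $s_i : U_i \to P_{X\times B}$ of $\pi_{f(B)}$, so that $\pi_{f(B)} s_i = id_{U_i}$. I would then pull these back along $1 \times f : X \times B \to X \times f(B)$, $(x,b) \mapsto (x, f(b))$, by setting $V_i = (1\times f)^{-1}(U_i)$. Since the $U_i$ cover $X \times f(B)$, the map $f \colon B \to f(B)$ is surjective, and $1\times f$ is continuous, the $V_i$ form an open cover of $X \times B$. On each $V_i$ I define $\sigma_i : V_i \to P_{Y\times f(B)}$ by $\sigma_i(x,b) = f_\ast s_i (1\times f)(x,b) = f \circ s_i(x, f(b))$, which is continuous as a composite of continuous maps.

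The verification of the two endpoint conditions is the heart of the argument, and it is where one must take care. Writing $\gamma = s_i(x, f(b))$, the section identity $\pi_{f(B)} s_i = id_{U_i}$ says precisely that $\gamma(0) = x$ and $f(\gamma(1)) = f(b)$, while $\gamma(1) \in B$ because $\gamma \in P_{X\times B}$. Evaluating the pushed-forward path at $t = 0$ gives $\sigma_i(x,b)(0) = f(\gamma(0)) = f(x)$, which is exactly the required start condition. Evaluating at $t = 1$ gives $\sigma_i(x,b)(1) = f(\gamma(1)) = f(b)$, using the section identity, which is the required end condition; moreover $f(b) \in f(B)$, so $\sigma_i$ indeed lands in $P_{Y\times f(B)}$ as demanded by the definition of $TC^S(f,B)$.

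Concluding, the sets $\{V_i\}_{i=0}^{n}$ together with the motion planners $\{\sigma_i\}$ exhibit an admissible cover for $TC^S(f,B)$, so $TC^S(f,B) \le n = TC^P(f,B)$. I expect the only genuine subtlety to be the endpoint check at $t = 1$: one must use the full force of the section condition, namely $f(\gamma(1)) = f(b)$, rather than the naive hope that $\gamma(1) = b$, which need not hold since $f$ is not assumed injective here. It is precisely the use of $f(\gamma(1))$ in place of $\gamma(1)$ that makes the passage from the Pave\v{s}i\'c target $B$ to the Scott target $f(B)$ work.
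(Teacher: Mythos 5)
Your proposal is correct and follows essentially the same route as the paper's own proof: pull the Pave\v{s}i\'c sections back along $1\times f$ to get a cover of $X\times B$, push the resulting paths forward via $f_\ast$, and verify the endpoint conditions. Your explicit remark that one must use $f(\gamma(1)) = f(b)$ from the section identity, rather than the (false in general) claim $\gamma(1) = b$, is exactly the subtlety the paper handles by writing the endpoint as some $b'$ with $f(b') = f(b)$.
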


\begin{proof}
Let $ TC^P(f,B)= n$. Then for some open subsets $U_0, U_1, \ldots , U_n $ of $X\times f(B)$, there are $s_i: U_i\to P_{X\times B}$ such that $s_i(x, f(b))(0)  = x$ and $s_i(x, f(b)) (1) = b'$ whenever $f(b) = f(b')$. We define $V_i= (1\times f)^{-1}(U_i) \cap X\times B$, and $\sigma_i: V_i\to P_{Y\times C}$ is defined  by $\sigma_i:= f_{\ast} s_i(1\times f)$, where $f_{\ast}: P_{X\times B}\to P_{Y\times C}$ maps each path $\gamma$ to  $f\gamma$. We have $\sigma_i(x,b)(0)= f_{\ast} s_i(1\times f)(x,b)(0)=  f_{\ast} s_i(x, f(b))(0)=  fs_i(x, f(b))(0)= f(x)$ and
 $\sigma_i(x,b)(1)= f_{\ast} s_i(1\times f)(x,b)(1)=  f_{\ast} s_i(x, f(b))(1)=  fs_i(x, f(b))(1)= f(b)$.
 Therefore $TC^S(f, B) \leq TC^P (f, B)$.
\end{proof}

Scott showed that for  any map $f: X \to Y$, the complexity of $f$, $TC^S(f)$ is less than topological complexities of domain and codomain of $f$. It holds for targeted cases too if we replace Farber's topological complexity by relative topological complexity of pair of spaces.

\begin{proposition}
Let $f: X\to Y$ be a map and let  $B\subseteq X$. Then
\[
TC^S(f, B) \leq \min \lbrace TC(X,B), TC(Y, f(B)) \rbrace.
\]
\end{proposition}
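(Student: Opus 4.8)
The plan is to prove the two inequalities $TC^S(f,B)\le TC(X,B)$ and $TC^S(f,B)\le TC(Y,f(B))$ separately, whereupon the minimum on the right follows immediately. Both arguments use the same mechanism already employed in the earlier comparison propositions of this section: start from an optimal open cover realizing the relative complexity on the right, and transport each local section into an $f$-motion planner with target $B$ on either the same cover or a pulled-back cover of the same cardinality. The point that makes the inequalities hold on the nose is that neither transport changes the number of open sets.

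For the bound $TC^S(f,B)\le TC(X,B)$, I would start from $TC(X,B)=n$, so there is an open cover $U_0,\dots,U_n$ of $X\times B$ with sections $s_i\colon U_i\to P_{X\times B}$ of $\pi_B$; concretely $s_i(x,b)$ is a path in $X$ from $x$ to $b$. I then push these paths forward by $f$, defining $\sigma_i=f_\ast s_i$, where $f_\ast\colon P_{X\times B}\to P_{Y\times f(B)}$ sends $\gamma$ to $f\gamma$. Because $s_i(x,b)(1)=b\in B$, the composite $f s_i(x,b)$ is a path in $Y$ from $f(x)$ to $f(b)\in f(B)$, so $\sigma_i(x,b)(0)=f(x)$ and $\sigma_i(x,b)(1)=f(b)$, which is exactly the target condition. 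Keeping the same cover $\{U_i\}$ yields $TC^S(f,B)\le n$.

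For the bound $TC^S(f,B)\le TC(Y,f(B))$, set $C=f(B)$ and begin from $TC(Y,C)=m$, with an open cover $W_0,\dots,W_m$ of $Y\times C$ and sections $t_j\colon W_j\to P_{Y\times C}$ of the natural fibration $P_{Y\times C}\to Y\times C$, so $t_j(y,c)$ is a path in $Y$ from $y$ to $c$. Here I would instead pull back along the continuous map $g\colon X\times B\to Y\times C$, $g(x,b)=(f(x),f(b))$, setting $V_j=g^{-1}(W_j)$ and $\sigma_j=t_j\,(g|_{V_j})$. Then $\sigma_j(x,b)=t_j(f(x),f(b))$ runs from $f(x)$ to $f(b)$, again meeting the target condition, and $\{V_j\}$ covers $X\times B$ since $g$ maps $X\times B$ into $Y\times C$ and $\{W_j\}$ covers $Y\times C$. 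This gives $TC^S(f,B)\le m$, and combining the two estimates finishes the proof.

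The steps are individually routine, so the only points I would check with care are the \emph{bookkeeping about target membership} and the \emph{covering claim} in the second inequality: one must confirm that $g$ genuinely lands in $Y\times C$ (so that the $V_j$ cover all of $X\times B$ and the $t_j$ may be applied to $g(x,b)$), and that both $f_\ast$ and $g$ are continuous, the former in the compact-open topology on path spaces. The latter is the standard fact that $\gamma\mapsto f\gamma$ defines a continuous map $PX\to PY$, which I would either cite or verify directly. I do not expect any genuine obstacle beyond this verification.
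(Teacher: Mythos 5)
Your proposal is correct and follows essentially the same route as the paper's own proof: for the first inequality, push the sections $s_i$ of $\pi_B$ forward along $f_\ast$ over the same cover of $X\times B$; for the second, pull the cover of $Y\times f(B)$ back along $(f\times f)|_{X\times B}$ and compose the sections with this map. The only difference is cosmetic (you name the map $g$ and verify continuity and the covering claim explicitly, which the paper leaves implicit).
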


\begin{proof}
Let $TC(X,B)= n$. Then for some open subsets $U_0, U_1, \ldots , U_n $ of $X\times B$, there are $s_i: U_i \to P_{X\times B}$ such that $\pi_B s_i= id_{U_i}$.
We define $\sigma_i:U_i\to P_{Y\times C}$ by $\sigma_i(x,b)= f_{\ast}s_i(x,b)$. Recall that $f_{\ast}$ maps each $\gamma$  in $PX$ to path $f\gamma$. Then we have $\sigma_i(x,b)(0)= f_{\ast}s_i(x,b)(0)= f(x)$ and  $\sigma_i(x,b)(1)= f_{\ast}s_i(x,b)(1)= f(b)$. Therefore $TC^S(f, B) \leq TC(X,B)$.

Let $TC(Y,f(B))= n$. Then for some open subsets $U_0, U_1, \ldots , U_n $ of $Y\times f(B)$, there are $s_i: U_i \to P_{Y\times f(B)}$ such that $\pi s_i= id_{U_i}$.
We define $V_i= (f \times f)^{-1}(U_i)\cap X\times B$, and $\sigma_i:U_i\to P_{Y\times f(B)}$ is defined  by  $\sigma_i(x,b)= s_i(f \times f) (x,b)$. Then we have $\sigma_i(x,b)(0)= s_i(f \times f)  (x,b)(0)= s_i(f(x),f(b))= f(x)$ and  $\sigma_i(x,b)(1)= s_if(x,b)(1)=s_i(f(x),f(b))= f(b)$. Therefore $TC^S(f, B) \leq TC(Y,C)$.
\end{proof}

In the following proposition, we investigate the targeted complexity of composition of maps. The original inequality was proved by Scott.

\begin{proposition}
Let $f:(X,B) \to (Y,f(B))$ and $g:(Y,f(B)) \to (Z,gf(B))$ be two maps. Then
\[
TC^S (gf, B)\leq \min \lbrace TC^S(f, B), TC^S(g, f(B)) \rbrace.
\]
\end{proposition}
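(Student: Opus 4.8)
The plan is to establish the two inequalities $TC^S(gf,B)\leq TC^S(f,B)$ and $TC^S(gf,B)\leq TC^S(g,f(B))$ separately; the right-hand side then follows by taking the minimum. In each case I would start from a motion-planning cover realizing the bound on the right and manufacture from it a $gf$-motion-planning cover with target $B$, i.e. a cover of $X\times B$ by open sets together with maps into $P_{Z\times gf(B)}$ whose two endpoints are $gf(x)$ and $gf(b)$. Both constructions are direct analogues of the two operations already used in the preceding propositions: post-composition with a map on path space (a pushforward $g_{\ast}$) and pre-composition with $f\times f$ (a pullback of the cover).

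For the first inequality, suppose $TC^S(f,B)=n$, so that $X\times B$ is covered by open sets $U_0,\ldots,U_n$ with maps $\sigma_i:U_i\to P_{Y\times f(B)}$ satisfying $\sigma_i(x,b)(0)=f(x)$ and $\sigma_i(x,b)(1)=f(b)$. I would push each path forward by $g$, defining $g_{\ast}:P_{Y\times f(B)}\to P_{Z\times gf(B)}$ by $g_{\ast}(\gamma)=g\gamma$; this is continuous for the compact-open topology and is well defined because $\gamma(1)\in f(B)$ forces $g\gamma(1)\in gf(B)$. Setting $\tau_i:=g_{\ast}\sigma_i$ on the \emph{same} cover $U_0,\ldots,U_n$ yields $\tau_i(x,b)(0)=gf(x)$ and $\tau_i(x,b)(1)=gf(b)$, which are exactly the endpoint conditions for a $gf$-motion planner with target $B$. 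Hence $TC^S(gf,B)\leq n$.

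For the second inequality, suppose $TC^S(g,f(B))=m$, witnessed by an open cover $W_0,\ldots,W_m$ of $Y\times f(B)$ and planners $\rho_j:W_j\to P_{Z\times gf(B)}$ with $\rho_j(y,c)(0)=g(y)$ and $\rho_j(y,c)(1)=g(c)$. Here I would pull back along the restriction of $f\times f$ to $X\times B$, which lands in $Y\times f(B)$ since $f(b)\in f(B)$: put $V_j=(f\times f)^{-1}(W_j)\cap(X\times B)$. These sets are open and cover $X\times B$ because the $W_j$ cover the image of $f\times f$. Defining $\tau_j(x,b):=\rho_j(f(x),f(b))$ then gives $\tau_j(x,b)(0)=gf(x)$ and $\tau_j(x,b)(1)=gf(b)$, so $TC^S(gf,B)\leq m$; combining the two bounds completes the proof.

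The steps are routine, and I do not anticipate a substantive obstacle. The only points to watch are the two well-definedness checks — that $g_{\ast}$ genuinely lands in $P_{Z\times gf(B)}$ in the first argument, and that $(f\times f)(X\times B)\subseteq Y\times f(B)$ so the pulled-back sets really cover $X\times B$ in the second — together with the bookkeeping of the endpoint conditions, both of which mirror the constructions used to compare $TC^S$ with $TC^P$ and with the relative complexities earlier in this section.
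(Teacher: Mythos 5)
Your proposal is correct and follows essentially the same route as the paper's own proof: the first inequality via the pushforward $g_{\ast}$ applied to the planners on the same cover of $X\times B$, and the second via pulling back the cover of $Y\times f(B)$ along $f\times f$ restricted to $X\times B$. Your added checks of well-definedness and continuity of $g_{\ast}$ are sound and slightly more careful than the paper's write-up, but the argument is the same.
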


\begin{proof}
Let $TC^S(f, B)= n$. Then for some open subsets $U_0, U_1, \ldots , U_n $ of $X \times B$, and for each $i$, there is $\sigma_i: U_i \to P_{Y\times f(B)}$ such that $\sigma_i(x,b)(0) = f(x)$ and $\sigma_i(x, b)(1) = f(b)$.  We define $\sigma^{\prime}_i: U_i \to P_{Z\times gf(B)}$ by  $\sigma^{\prime}_i(x,b)= g_{\ast}\sigma_i(x,b)$, where $g_{\ast}: P_{Y\times f(B)}\to P_{Z\times gf(B)}$ maps each $\gamma$ to $g\gamma$. Then we have $\sigma^{\prime}_i(x,b)(0)= g_{\ast}\sigma_i(x,b)(0)= g(\sigma_i(x,b)(0))= gf(x)$ and $\sigma^{\prime}_i(x,b)(1)= g_{\ast}\sigma_i(x,b)(1)= g(\sigma_i(x,b)(1))=gf(b)$.
Therefore $TC^S (gf, B)\leq TC^S(f, B)$.

Now let $TC^S (g, f(B)) = n$. Then for some open subsets $U_0, U_1, \ldots , U_n $ of $Y \times f(B)$, and for $i$, there is $\sigma_i: U_i \to P_{Z\times gf(B)}$ such that $\sigma_i (y, f(b))(0) = g(y)$ and $\sigma_i(y, f(b)) (1) = gf(b)$.  We define $V_i= (f \times f)^{-1}(U_i) \cap X \times B$, and $\sigma^{\prime}_i: V_i \to P_{Z\times gf(B)}$ is defined by  $\sigma^{\prime}_i(x,b)= \sigma_i(f \times f)(x,b)$. Then   $\sigma^{\prime}_i(x,b)(0)= \sigma_i(f(x),f(b))(0)= gf(x)$ and $\sigma_i^{\prime}(x,b)(1)= \sigma_i(f(x),f(b))(1)= gf(b)$.
 Therefore $TC^S (gf, B)\leq TC^S (g, B)$.
\end{proof}

For the composition of maps, Scott's complexity is less than the complexity of all components. The targeted complexity satisfies this property; that is, for each composition of maps, the following inequality holds:
\[
TC^S (f_1 \circ \cdots \circ f_n, B) \le \min \{TC^S (f_1, f_2 \circ \cdots \circ f_n (B)), \ldots, TC^S( f_n, B)\}.
\]

The product of maps has useful roles in robotic motion planning; it establishes the motion of two robots simultaneously, and it is significant to find some bounds for the complexity of maps. The following proposition can be proven by a similar argument as in \cite{bib7}.

\begin{proposition}
Let $f:(X,B) \to (Y, f(B))$ and $g:(Z,D) \to (W, g(D))$ be two maps, and let $Y\times W$ be a normal space. Then $TC^S (f \times g, B \times D) \leq TC^S(f, B) + TC^S (g, D)$.
\end{proposition}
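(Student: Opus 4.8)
The plan is to follow Scott's product argument for $TC^S$ verbatim, upgrading it to the targeted setting by keeping careful track of where the paths end. First I would rewrite all the spaces involved so that the two factors separate. The coordinate-swap homeomorphism $((x,z),(b,d))\mapsto((x,b),(z,d))$ identifies the base $(X\times Z)\times(B\times D)$ with $(X\times B)\times(Z\times D)$, while a path in $Y\times W$ is the same datum as a pair of paths, one in $Y$ and one in $W$, and such a path ends in $f(B)\times g(D)=(f\times g)(B\times D)$ exactly when its two coordinates end in $f(B)$ and in $g(D)$ respectively. This gives a canonical identification $P_{(Y\times W)\times(f(B)\times g(D))}\cong P_{Y\times f(B)}\times P_{W\times g(D)}$, under which a targeted Scott planner for $f\times g$ is precisely a pair consisting of a targeted planner for $f$ and one for $g$.

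Next I would take optimal data on each factor. Writing $TC^S(f,B)=m$ and $TC^S(g,D)=k$, choose an open cover $U_0,\dots,U_m$ of $X\times B$ with planners $\sigma_i:U_i\to P_{Y\times f(B)}$ (so $\sigma_i(x,b)(0)=f(x)$ and $\sigma_i(x,b)(1)=f(b)$) and an open cover $V_0,\dots,V_k$ of $Z\times D$ with planners $\rho_j:V_j\to P_{W\times g(D)}$ (so $\rho_j(z,d)(0)=g(z)$ and $\rho_j(z,d)(1)=g(d)$). The products $U_i\times V_j$ form an open cover of $(X\times B)\times(Z\times D)$, and on each of them the coordinatewise planner $\tau_{ij}((x,b),(z,d)):=\bigl(\sigma_i(x,b),\rho_j(z,d)\bigr)$, viewed as the path $t\mapsto(\sigma_i(x,b)(t),\rho_j(z,d)(t))$ in $Y\times W$, starts at $(f(x),g(z))=(f\times g)(x,z)$ and ends at $(f(b),g(d))=(f\times g)(b,d)\in f(B)\times g(D)$. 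Thus each $U_i\times V_j$ carries a valid targeted Scott planner for $f\times g$; the only defect is that there are $(m+1)(k+1)$ of them.

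Finally I would cut the cover down to $m+k+1$ members by grouping along the diagonals $i+j=\ell$ for $\ell=0,\dots,m+k$. This collapsing step is the heart of the matter and the main obstacle: distinct products $U_i\times V_j$ and $U_{i'}\times V_{j'}$ with $i+j=i'+j'$ can overlap, so the naive union $\bigcup_{i+j=\ell}(U_i\times V_j)$ need not carry a single continuous planner, since the $\tau_{ij}$ may disagree on overlaps. This is exactly the point at which the normality hypothesis on $Y\times W$ enters: it is the ingredient that powers the sectional-category product inequality $secat(p\times p')\le secat\,p+secat\,p'$ of \cite[Proposition 2.4]{bib7}. Under the identification of the first paragraph, the targeted planners $\sigma_i$ and $\rho_j$ are sections of the path fibrations $P_{Y\times f(B)}\to Y\times f(B)$ and $P_{W\times g(D)}\to W\times g(D)$ pulled back along $(x,b)\mapsto(f(x),f(b))$ and $(z,d)\mapsto(g(z),g(d))$, so I would either invoke \cite[Proposition 2.4]{bib7} directly for these two fibrations, or replicate its shrinking argument: using normality I separate, within each diagonal, the members of the product cover, making the piecewise-defined planner on $W_\ell:=\bigcup_{i+j=\ell}(U_i\times V_j)$ continuous. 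This produces an open cover $W_0,\dots,W_{m+k}$ of $(X\times Z)\times(B\times D)$, each piece carrying a targeted Scott planner, whence $TC^S(f\times g,B\times D)\le m+k=TC^S(f,B)+TC^S(g,D)$.
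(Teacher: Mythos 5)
Your construction of the coordinatewise planners $\tau_{ij}$ on the product cover $\{U_i\times V_j\}$, and the reduction of the whole problem to the diagonal-collapsing step, is exactly the argument the paper has in mind (its ``proof'' is a one-line deferral to Scott's product argument), and everything up to that point is correct. The gap is in how you discharge the collapsing step. The separation of the overlapping members $U_i\times V_j$ within a diagonal must be carried out in the space being covered, namely $(X\times B)\times(Z\times D)$: the standard shrinking argument uses partitions of unity subordinate to $\{U_i\}$ and $\{V_j\}$, hence normality of $X\times B$ and $Z\times D$ (equivalently, invoking \cite[Proposition~2.4]{bib7} for your two pulled-back fibrations $\bar f^{*}\pi$ and $\bar g^{*}\pi$ requires the product of their \emph{bases}, which is $(X\times B)\times(Z\times D)$, to be normal). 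Normality of $Y\times W$ gives no control over open subsets of $(X\times B)\times(Z\times D)$ --- the sets $U_i$ and $V_j$ are arbitrary open sets on the domain side, not preimages of open sets in the codomain --- so your assertion that the hypothesis on $Y\times W$ is ``exactly'' the ingredient powering the inequality $secat(p\times p')\le secat(p)+secat(p')$ for these two fibrations is false, and with only the stated hypothesis the piecewise planner on $W_\ell$ cannot be made continuous by the route you describe.

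The only way to make codomain-side normality do any work is to apply the product inequality to the honest path fibrations $P_{Y\times f(B)}\to Y\times f(B)$ and $P_{W\times g(D)}\to W\times g(D)$ and then pull back along $\bar f\times\bar g$; but (i) that needs $(Y\times f(B))\times(W\times g(D))$ to be normal, which does not follow from normality of $Y\times W$ (normality passes neither to products nor to arbitrary subspaces), and (ii) since pulling back only gives $secat$ of the pullback $\le$ $secat$ of the original, this route yields $TC^S(f\times g,B\times D)\le TC(Y,f(B))+TC(W,g(D))$, which is weaker than the claimed bound because $TC^S(f,B)\le TC(Y,f(B))$ and $TC^S(g,D)\le TC(W,g(D))$. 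So, as written, your argument proves the proposition under the hypothesis that $(X\times B)\times(Z\times D)$ is normal, not under the stated one. (This mismatch is partly inherited from the paper, whose normality hypotheses throughout sit on the codomains while the lemma it cites needs base-side normality; but a complete proof must either adopt the corrected hypothesis or explain how the stated one suffices, and the latter is not available by your argument.)
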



\section{Higher targeted complexity of a map}

In this section, we study the higher complexity of maps with a target. Recall that the higher complexity of a map is a generalization of usual complexity, which increases endpoints to more than two endpoints. In other words, the $n$th higher complexity considers paths as $n$ pieces attached together; see \cite{Bas, Is, bib6}.
In  \cite[Definition 3.7]{Is}, the $n$-dimensional higher topological complexity of $f$  was introduced for fibration maps. Also in  \cite[Proposition 3.8]{Is}, it was proved that $TC_2(f)= TC^{P}(f)$. Now we reformulate $TC_n(f)$ for any map. 

\begin{definition}
Let $f:X\to Y$ be a map and let $p_i:X^n\to X$ be the projection map onto the $i$th component for $1\leq i\leq n$.
Then the $n$-dimensional higher complexity of $f$ is defined as
\[TC_n(f)= D(fp_1, fp_2, \ldots, fp_n),\]
where $D$ denotes the homotopic distance between maps \cite{Mac}. Recall that the homotopic distance between $m$ maps $f_1, \ldots, f_m: X \to Y$ is the least integer $n$ for which there exists an open  covering $\{U_i\}_{i =0}^n$ of $X$ such that $f_j\vert_{U_i} \simeq f_k\vert_{U_i}$ for each $0 \le i \le n$ and $1\le j,k \le m$.
In the above definition, if $f$ is the identity map, then $TC_n(f)= TC_n(X)$. Also, we have $TC_2(f)= TC^{MW}(f)= TC^S(f)$.
Moreover, since the homotopic distance preserves homotopic maps,  immediately, $TC_n(f)$  is homotopy invariant.
\end{definition}

\begin{remark}
If $f\simeq g: X\to Y$, then $TC_n(f)= TC_n(g)$.
\end{remark}

The main goal in this section is to define higher targeted complexity and investigate its properties.
Let $f:X\to Y$ be a map and let $B\subseteq X$. Consider the map $\bar{f}:= f\times f\vert_B: X\times B \to Y\times f(B)$.
\begin{definition}\label{D1}
 Let $f:X\to Y$ be a map and let $\pi_i: X^n\times B^n \to X\times B$ be the projection map onto the $i$th component for $1\leq i\leq n$.
Then the $n$-dimensional targeted complexity of $f$ is
\[
TC_n(f,B)= D(\bar{f}\pi_1, \bar{f}\pi_2, \ldots, \bar{f}\pi_n).
\]
\end{definition}

If $f$ is the identity map, then $TC_n(f, B)= TC_n(X,B)$. Also, we have $TC_2(f,B)= TC^S(f, B)$.
Moreover, if $B=\emptyset $, then $TC_n(f, B)= TC_n(f)$.
Furthermore, if $f\simeq g: X\to Y$, then $TC_n(f, B)= TC_n(g, B)$, because the  homotopic distance $D$, is homotopy invariant.
\begin{proposition}
Let $f:(X,B) \to (Y, f(B))$ and $g:(Y,f(B)) \to (Z,gf(B))$ be two maps. Then
\[
TC_n(gf, B)\leq  TC_n(f, B).
\]
\end{proposition}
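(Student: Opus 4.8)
The plan is to reduce the statement to the monotonicity of homotopic distance under post-composition, generalised from two maps (Proposition \ref{e}) to $n$ maps. First I would unwind the definitions. By Definition \ref{D1}, writing $\bar{f}=f\times f|_B:X\times B\to Y\times f(B)$ and $\bar{g}=g\times g|_{f(B)}:Y\times f(B)\to Z\times gf(B)$, one has $TC_n(f,B)=D(\bar{f}\pi_1,\ldots,\bar{f}\pi_n)$ and $TC_n(gf,B)=D(\overline{gf}\,\pi_1,\ldots,\overline{gf}\,\pi_n)$, where the coordinate projections $\pi_i:X^n\times B^n\to X\times B$ act by $\pi_i(x_1,\ldots,x_n,b_1,\ldots,b_n)=(x_i,b_i)$.

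The key observation is the factorisation $\overline{gf}=\bar{g}\circ\bar{f}$: indeed $\bar{g}\bar{f}(x,b)=\bar{g}(f(x),f(b))=(gf(x),gf(b))=\overline{gf}(x,b)$, where the composite is well defined because $g:(Y,f(B))\to(Z,gf(B))$ is a map of pairs. Consequently each defining map for $TC_n(gf,B)$ is the post-composition of the corresponding defining map for $TC_n(f,B)$ with the single continuous map $\bar{g}$; that is, $\overline{gf}\,\pi_i=\bar{g}\circ(\bar{f}\pi_i)$ for every $i=1,\ldots,n$.

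It then remains to establish the $n$-map analogue of Proposition \ref{e}: for maps $h_1,\ldots,h_n:W\to Y$ and a map $k:Y\to Z$, one has $D(kh_1,\ldots,kh_n)\le D(h_1,\ldots,h_n)$. I would prove this directly from the definition of homotopic distance for several maps recalled in Section 6. If $D(h_1,\ldots,h_n)=m$, choose an open cover $\{U_0,\ldots,U_m\}$ of $W$ on which the $h_j$ are pairwise homotopic; composing a homotopy $h_j|_{U_i}\simeq h_\ell|_{U_i}$ with $k$ yields $kh_j|_{U_i}\simeq kh_\ell|_{U_i}$, so the \emph{same} cover witnesses $D(kh_1,\ldots,kh_n)\le m$. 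Applying this with $k=\bar{g}$ and $h_i=\bar{f}\pi_i$ gives
\[
TC_n(gf,B)=D(\bar{g}\bar{f}\pi_1,\ldots,\bar{g}\bar{f}\pi_n)\le D(\bar{f}\pi_1,\ldots,\bar{f}\pi_n)=TC_n(f,B),
\]
as required.

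The only point requiring care — and what I expect to be the "main obstacle", though it is minor — is that Proposition \ref{e} is stated only for a pair of maps, whereas here the homotopic distance involves $n$ maps sharing a single common cover, so the two-map version cannot simply be chained. Fortunately the one-line composition-of-homotopies argument above works verbatim for any number of maps and needs no normality or fibration hypotheses, so the multi-map monotonicity can be inserted without difficulty.
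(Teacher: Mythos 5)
Your proof is correct and takes essentially the same route as the paper: both rest on the factorisation $\overline{gf}=\bar{g}\circ\bar{f}$ followed by monotonicity of the homotopic distance under post-composition with a fixed map. The only difference is that the paper simply cites \cite[Proposition 3.1]{Mac} for the $n$-map version of this monotonicity, whereas you prove it directly from the definition of homotopic distance; your composition-of-homotopies argument is valid and merely makes that cited step self-contained.
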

\begin{proof}
Since $\overline{gf} = \bar{g} \circ \bar{f}$, by \cite[ Proposition 3.1]{Mac}, we have
\begin{align*}
TC_n(gf, B) &= D(\overline{gf}\pi_1, \overline{gf}\pi_2, \ldots, \overline{gf}\pi_n)\\
&= D(\bar{g}\bar{f}\pi_1, \bar{g}\bar{f}\pi_2, \ldots, \bar{g}\bar{f}\pi_n)\\
& \leq
D(\bar{f}\pi_1, \bar{f}\pi_2, \ldots, \bar{f}\pi_n)= TC_n(f, B).
\end{align*}
\end{proof}

\begin{proposition}
Let $f:(X,B) \to (Y,f(B))$ and $g:(Z,D) \to (W,g(D))$ be two maps and let $Y$ and $W$ be normal spaces. Then
 \[
TC_n(f \times g, B \times D) \leq TC_n(f, B) + TC_n(g, D).
 \]
\end{proposition}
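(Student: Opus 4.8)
The plan is to reduce the statement to the product inequality for the homotopic distance $D$, combined with the invariance of $D$ under homeomorphisms. Recall from Definition \ref{D1} the maps $\bar{f}\colon X\times B\to Y\times f(B)$, $\bar{g}\colon Z\times D\to W\times g(D)$, and $\overline{f\times g}\colon (X\times Z)\times(B\times D)\to (Y\times W)\times(f(B)\times g(D))$ given by $\overline{f\times g}((x,z),(b,d))=((f(x),g(z)),(f(b),g(d)))$. Writing $\pi_i^X\colon X^n\times B^n\to X\times B$, $\pi_i^Z\colon Z^n\times D^n\to Z\times D$, and $\Pi_i\colon (X\times Z)^n\times(B\times D)^n\to(X\times Z)\times(B\times D)$ for the $i$th coordinate projections, by definition $TC_n(f\times g,B\times D)=D(\overline{f\times g}\,\Pi_1,\ldots,\overline{f\times g}\,\Pi_n)$.

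First I would identify each $\overline{f\times g}\,\Pi_i$ with the external product $(\bar{f}\pi_i^X)\times(\bar{g}\pi_i^Z)$ after reshuffling coordinates. Let $\Phi\colon (X\times Z)^n\times(B\times D)^n\to(X^n\times B^n)\times(Z^n\times D^n)$ and $\Psi\colon (Y\times W)\times(f(B)\times g(D))\to(Y\times f(B))\times(W\times g(D))$ be the homeomorphisms that gather the $X$- and $Z$-coordinates (respectively the $Y$- and $W$-coordinates) into separate blocks. A direct check on points shows $\Psi\circ\overline{f\times g}\,\Pi_i=\big((\bar{f}\pi_i^X)\times(\bar{g}\pi_i^Z)\big)\circ\Phi$ for every $i$, since both sides send a point to $((f(x_i),f(b_i)),(g(z_i),g(d_i)))$. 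Because $D$ is unchanged under pre- and post-composition with a homeomorphism (post-composition by \cite[Proposition 3.1]{Mac} applied to $\Psi^{-1}$ and to its inverse, pre-composition because a homeomorphism induces a bijection on open covers), this yields
\[
TC_n(f\times g,B\times D)=D\big((\bar{f}\pi_1^X)\times(\bar{g}\pi_1^Z),\ldots,(\bar{f}\pi_n^X)\times(\bar{g}\pi_n^Z)\big).
\]

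Next I would invoke the product inequality for the homotopic distance of a family of maps, namely $D(a_1\times b_1,\ldots,a_n\times b_n)\leq D(a_1,\ldots,a_n)+D(b_1,\ldots,b_n)$, applied with $a_i=\bar{f}\pi_i^X$ and $b_i=\bar{g}\pi_i^Z$. By Definition \ref{D1} the right-hand side is exactly $TC_n(f,B)+TC_n(g,D)$, and the claimed bound follows at once.

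It remains to justify this product inequality for $D$, which is where the real work and the normality hypothesis enter. Choosing minimizing open covers $\{U_j\}_{j=0}^{p}$ of $X^n\times B^n$ and $\{V_k\}_{k=0}^{q}$ of $Z^n\times D^n$ (with $p=TC_n(f,B)$, $q=TC_n(g,D)$) on whose members the relevant component maps are pairwise homotopic, the products $U_j\times V_k$ cover $(X^n\times B^n)\times(Z^n\times D^n)$, and on each such set the maps $(\bar{f}\pi_i^X)\times(\bar{g}\pi_i^Z)$ are pairwise homotopic via the product homotopies $H\times K$ obtained from the two factors. I expect the main obstacle to be the passage from these $(p+1)(q+1)$ sets to only $p+q+1$ sets: one groups the products by the value $j+k$ and must arrange that the sets in each group be disjoint, so that the individual product homotopies glue into a single homotopy on the union. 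This is the standard diagonalisation underlying Lusternik--Schnirelmann-type product bounds; it requires a normality hypothesis in order to apply the shrinking lemma (or a subordinate partition of unity), which is the reason $Y$ and $W$ are assumed normal, exactly paralleling the treatment of $secat(p\times p')\leq secat\,p+secat\,p'$ in \cite[Proposition 2.4]{bib7}. The coordinate identification of the first step, by contrast, is purely formal and should present no difficulty.
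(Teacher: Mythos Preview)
Your approach is essentially the paper's: both reduce the inequality to the product formula for higher homotopic distance, which the paper simply quotes as \cite[Theorem 3.13]{bib12} rather than re-deriving. You are in fact more careful than the paper in making the coordinate reshuffle via $\Phi$ and $\Psi$ explicit; the paper writes $\overline{f\times g}\,\pi_i=(h\times k)\pi_i=h\pi_i\times k\pi_i$ directly, silently identifying $(X\times Z)\times(B\times D)$ with $(X\times B)\times(Z\times D)$.

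One small slip: in your final paragraph you say the normality of $Y$ and $W$ is what allows the shrinking/diagonalisation step. But the covers $\{U_j\}$ and $\{V_k\}$ live on the \emph{domains} $X^n\times B^n$ and $Z^n\times D^n$, and it is there (or on their product) that the shrinking lemma must be applied; normality of the targets $Y,W$ plays no role in that argument. The hypothesis as stated in the proposition mirrors the paper, but your justification of \emph{where} it enters is misplaced.
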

\begin{proof}
Let $h= f\times f\vert_B$ and let $k= g \times g\vert_D$. Then by Definition \ref{D1}, we have
\begin{align*}
TC_n(f\times g, B \times D)& =  D((h\times k) \pi_1, (h\times k)\pi_2, \ldots, (h\times k)\pi_n)\\
&=
D(h\pi_1 \times k\pi_1, h\pi_2\times k\pi_2, \ldots, h\pi_n \times k\pi_n).
\end{align*}
Now it follows from \cite[Theorem 3.13]{bib12} that
\begin{align*}
D(h\pi_1 \times k\pi_1, h\pi_2\times k\pi_2, \ldots, h\pi_n \times k\pi_n) &\leq
D(h\pi_1, h\pi_2, \ldots, h\pi_n)\\
& \quad +
 D(k\pi_1, k\pi_2, \ldots, k\pi_n)\\
 & = TC_n(f, B) + TC_n(g, D).
 \end{align*}
\end{proof}

For the $n$-dimensional complexity of map, Is et al. \cite[Proposition 3.9]{Is} proved that $TC_n(f) \le TC_{n+1} (f)$ . For the targeted case, it is immediately proven by \cite[Proposition 2.5]{bib12}.

\begin{proposition}
Let $f:(X,B) \to (Y,C)$ is a pair map. Then $TC_n(f, B)\leq TC_{n+1}(f, B)$.
\end{proposition}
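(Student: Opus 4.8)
The plan is to reduce this higher-dimensional statement to elementary monotonicity properties of the homotopic distance $D$, the only genuine issue being that $TC_n(f,B)$ and $TC_{n+1}(f,B)$ are homotopic distances of families of maps defined on \emph{different} domains, namely $X^n\times B^n$ and $X^{n+1}\times B^{n+1}$. To bridge this gap I would introduce the coordinate-duplication embedding
\[
j\colon X^n\times B^n\longrightarrow X^{n+1}\times B^{n+1},\qquad
j(x_1,\dots,x_n,b_1,\dots,b_n)=(x_1,\dots,x_n,x_n,\,b_1,\dots,b_n,b_n),
\]
which is continuous and well defined precisely because the duplicated coordinate $b_n$ already lies in $B$. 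Writing $\pi_i^{(m)}$ for the $i$th projection $X^m\times B^m\to X\times B$, the key bookkeeping identity is $\pi_i^{(n+1)}\circ j=\pi_i^{(n)}$ for $1\le i\le n$ together with $\pi_{n+1}^{(n+1)}\circ j=\pi_n^{(n)}$; composing with $\bar f$ then gives $\bar f\pi_i^{(n+1)}\circ j=\bar f\pi_i^{(n)}$ for $i\le n$ and $\bar f\pi_{n+1}^{(n+1)}\circ j=\bar f\pi_n^{(n)}$.

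With this in hand the argument is a short chain of inequalities. First I would observe that $D$ does not increase under precomposition: if $\{U_\alpha\}$ is an open cover of $X^{n+1}\times B^{n+1}$ realizing $TC_{n+1}(f,B)=D(\bar f\pi_1^{(n+1)},\dots,\bar f\pi_{n+1}^{(n+1)})$, then $\{j^{-1}(U_\alpha)\}$ is an open cover of $X^n\times B^n$ on which the maps $\bar f\pi_i^{(n+1)}\circ j$ are pairwise homotopic, simply by pulling the witnessing homotopies back along $j$. Hence
\[
D\bigl(\bar f\pi_1^{(n)},\dots,\bar f\pi_n^{(n)},\bar f\pi_n^{(n)}\bigr)
=D\bigl(\bar f\pi_1^{(n+1)}\circ j,\dots,\bar f\pi_{n+1}^{(n+1)}\circ j\bigr)
\le TC_{n+1}(f,B).
\]
Finally I would discard the repeated last map: any cover realizing the homotopic distance of the $(n+1)$-tuple with a duplicated entry satisfies in particular all of the pairwise homotopy conditions for the $n$-tuple $(\bar f\pi_1^{(n)},\dots,\bar f\pi_n^{(n)})$, so that $TC_n(f,B)=D(\bar f\pi_1^{(n)},\dots,\bar f\pi_n^{(n)})$ is bounded above by the left-hand side, yielding $TC_n(f,B)\le TC_{n+1}(f,B)$. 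Both of these facts — monotonicity under precomposition and invariance of $D$ under repeating an entry — are exactly the elementary behaviour of $D$ recorded in \cite[Proposition 2.5]{bib12}, so the statement indeed follows at once from that result.

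I expect the only point demanding care to be the domain mismatch handled by $j$: one must check that $j$ genuinely lands in $X^{n+1}\times B^{n+1}$ (which is why the \emph{last} coordinate, lying in $B$, is the safe one to duplicate) and that the projection identities hold on the nose, after which nothing beyond the formal properties of $D$ is required. Note that no normality or fibration hypotheses enter, so the argument should go through verbatim for an arbitrary pair map $f\colon(X,B)\to(Y,C)$.
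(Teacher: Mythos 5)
Your proof is correct and takes essentially the same route as the paper: the paper disposes of this proposition in one line by declaring it ``immediately proven by \cite[Proposition 2.5]{bib12}'', and the facts you invoke (monotonicity of $D$ under precomposition and discarding a duplicated entry) are precisely the elementary properties of higher homotopic distance behind that citation. Your write-up simply makes explicit the coordinate-duplication embedding $j$ that resolves the mismatch between the domains $X^n\times B^n$ and $X^{n+1}\times B^{n+1}$ --- a detail the paper leaves entirely to the cited reference.
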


\section*{Acknowledgments}
This research was supported by a grant from Ferdowsi University of Mashhad--Graduate Studies (No. 3/57099).

%
%
%

\end{document}